\newcommand{\R}{\ensuremath{\mathbb{R}}}
\newtheorem {theorem} {Theorem} 
\newtheorem {proposition} [theorem] {Proposition}
\newtheorem {lemma} [theorem] {Lemma}
\newtheorem {definition} [theorem] {Definition}
\newtheorem {remark} {Remark}
\begin{document}

\title[On families of piecewise
smooth vector fields] {On $3-$parameter families of piecewise smooth
vector fields in the plane.}

\author[C.A. Buzzi, T. de Carvalho and M.A. Teixeira]
{Claudio A. Buzzi$^1$, Tiago de Carvalho$^2$ and\\ Marco A.
Teixeira$^3$}

\address{$^1$ IBILCE--UNESP, CEP 15054--000,
S. J. Rio Preto, S\~ao Paulo, Brazil}

\address{$^2$ FC--UNESP, CEP 17033--360,
Bauru, S\~ao Paulo, Brazil}

\address{$^3$ IMECC--UNICAMP, CEP 13081--970, Campinas,
S\~ao Paulo, Brazil}

\email{buzzi@ibilce.unesp.br}

\email{ti-car@hotmail.com}

\email{teixeira@ime.unicamp.br}

\subjclass[2010]{Primary 34A36, 37G10, 37G05}

\keywords{non$-$smooth vector field, bifurcation, canard cycle,
limit cycle, pseudo equilibrium.}
\date{}
\dedicatory{} \maketitle


\begin{abstract}

This paper is concerned with the local bifurcation analysis around
typical singularities of piecewise smooth planar dynamical systems.
Three$-$parameter families of a class of non$-$smooth vector fields
are studied and the tridimensional bifurcation diagrams are
exhibited. Our main results describe the unfolding of the so called
$fold-cusp$ singularity by means of the variation of $3$ parameters.

\end{abstract}

\section{Introduction}

NSDS's have become certainly one of the common frontiers between
Ma\-the\-matics and Physics or Engineering. Problems involving
impact or friction are piecewise$-$smooth, as are many control
systems with thresholds. Many authors have contributed to the study
of Filippov systems  (see for instance \cite{Fi} and \cite{K}). One
of the starting points for a systematic approach to the geometric
and qualitative analysis of non$-$smooth dynamical systems (NSDS's,
for short) is \cite{T1}, on smooth systems in $2-$dimensional
manifolds with boundary. The generic singularities that appear in
NSDS's, to the best of our knowledge, were first studied in
\cite{T}. Bifurcations and related pro\-blems involving or not
sliding regions were studied in papers like
\cite{Coll-Gasull-Prohens, Glendinning, diBernardo-1, diBernardo-2}.
The classification of codimension$-1$ local and some global
bifurcations for planar systems was given in \cite{Kuznetsov}. In
\cite{Marcel} codimension$-2$  singularities were discussed and it
was shown how to construct the homeomorphisms which lead to
topological equivalences between two NSDS's when the discontinuity
set is a planar smooth curve. See \cite{Marco-enciclopedia} or
\cite{diBernardo-livro} for a survey on NSDS's and references there
in.

The specific topic addressed in this paper is the qualitative
analysis of \textit{fold$-$cusp singularities} of NSDS's, where a
fold and a cusp coincide. Moreover, the bifurcation diagrams are
exhibited.

Specifically, we distinguish the following cases (see Figure
\ref{fig fold cusps}):

$\bullet$ Unfolding of an invisible fold$-$cusp singularity: 
\begin{equation}\label{eq fold-cusp 3 parametros inicio}
Z_{\lambda, \beta, \mu} = \left\{
      \begin{array}{ll}
        X_{\lambda} = \left(
              \begin{array}{c}
                   1 \\
               -x + \lambda
\end{array}
      \right)
 & \hbox{if $y \geq 0$,} \\
         Y_{\beta} = \left(
              \begin{array}{c}
               -1 \\
               - x^2 + \beta - \frac{\partial B}{\partial
                x}(x,\beta,\mu))
\end{array}
      \right)& \hbox{if $y \leq 0,$}
      \end{array}
    \right.
\end{equation}
where   $(\lambda,\beta) \in (-\lambda_0,\lambda_0) \times
(-\beta_0,\beta_0)$, with $\lambda_0
>0$ and $\beta_0>0$ sufficiently small and $B$ is a $C^2 -$\textit{bump function}
such that $B(x,\beta,\mu) =0$ if $\beta \leq 0$ and
\begin{equation}\label{eq bump function}
B(x,\beta,\mu) = \left\{
                         \begin{array}{ll}
                           0, & \hspace{-1cm}\hbox{if  $x < -\sqrt{\beta}$ or $x > 4 \sqrt{\beta}$;}  \vspace{.4cm}\\
                            B_{1}(x,\beta) + f(\beta,\mu)
                             , & \hbox{if $- \sqrt{\beta} \leq x \leq \sqrt{\beta} $;} \vspace{.4cm} \\
                            B_{2}(x,\beta) +  f(\beta,\mu)
                                        , & \hbox{if $ \sqrt{\beta} < x \leq 4 \sqrt{\beta} $.}
                         \end{array}
                       \right.
\end{equation}
if $\beta > 0$, where
$$
B_{1}(x,\beta) = \displaystyle\frac{-3}{128 \beta} \left(%
                                        \begin{array}{c}
                                           x^2 ( 208 + 3 \beta) - \\
                                           4 x \sqrt{\beta} (176 + 15 \beta) + \beta (688+93 \beta) \\
                                        \end{array}%
                                       \right),
$$

$$
B_{2}(x,\beta) = \displaystyle\frac{-1}{48 \beta} \left(%
                                        \begin{array}{c}
                                           (x- 4 \sqrt{\beta})^3 ((x^2 + \beta) ( -16 + 9 \beta) - \\
                                           2 x \sqrt{\beta} (16 + 15 \beta))
                                        \end{array}%
                                       \right)
$$
and
$$
f(\beta,\mu) = \displaystyle\frac{\mu}{48} \left(%
                                        \begin{array}{c}
                                           - 8 \beta (128+ 3 \beta) \mu
                                           \sqrt{\beta} (256 + 63 \beta) \mu - ( -64 +45 \beta ) \mu^2 - \\
                                           \beta^{-1/2} (80+3 \beta) \mu^3 + \beta^{-1}(-16 + 9  \beta) \mu^4)
                                        \end{array}%
                                       \right).
$$

$\bullet$  Unfolding of a visible fold$-$cusp singularity: 
\begin{equation}\label{eq fold-cusp 2 parametros inicio}
Z_{\lambda, \beta} = \left\{
      \begin{array}{ll}
        X_{\lambda} = \left(
              \begin{array}{c}
                   1 \\
                x - \lambda
\end{array}
      \right)
 & \hbox{if $y \geq 0$,} \\
         Y_{\beta} = \left(
              \begin{array}{c}
               1 \\
              - x^2 + \beta
\end{array}
      \right)& \hbox{if $y \leq 0,$}
      \end{array}
    \right.
\end{equation}
where   $(\lambda,\beta) \in (-\lambda_0,\lambda_0) \times
(-\beta_0,\beta_0)$, with $\lambda_0
>0$ and $\beta_0>0$ sufficiently small.


\begin{figure}[h!]
\begin{minipage}[b]{0.493\linewidth}
\begin{center}
\epsfxsize=4cm  \epsfbox{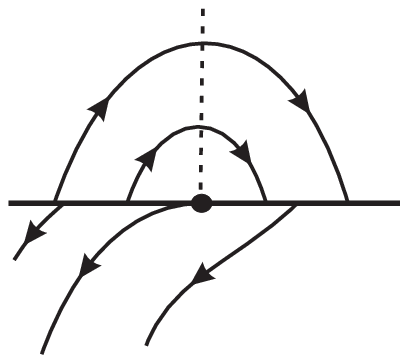}
\end{center}\end{minipage} \hfill
\begin{minipage}[b]{0.493\linewidth}
\begin{center}\psfrag{A}{$-\sqrt{\beta}$} \psfrag{B}{$0$} \psfrag{C}{$\sqrt{\beta}$}
\psfrag{D}{$$} \psfrag{E}{$3\sqrt{\beta}$}
\psfrag{F}{$4\sqrt{\beta}$}
\psfrag{G}{}
\epsfxsize=4cm \epsfbox{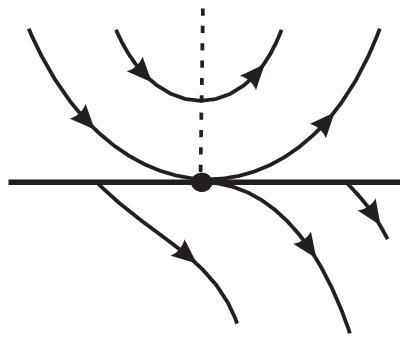}
\end{center}
\end{minipage}
\caption{\small{Fold$-$cusp singularities. Following the notation of
\cite{Marcel}, a point $p_0\in \Sigma$ is a \textbf{fold$-$cusp
singularity} of $Z=(X,Y)$ if it is a $\Sigma-$fold point of $X$ and
a $\Sigma-$cusp point of $Y$ (see precise definitions in Section
\ref{secao preliminares}).}}\label{fig fold cusps}
\end{figure}

\subsection{Setting the problem}\label{subsecao colocacao do
problema}

Denote both, $Z_{\beta,\lambda,\mu}$ in \eqref{eq fold-cusp 3
parametros inicio} and $Z_{\beta,\lambda}$ in \eqref{eq fold-cusp 2
parametros inicio}, by $Z=(X,Y)$. In short our goal is to study the
local dynamics of $Z$ consisting of two smooth vector fields $X$ and
$Y$ in $\R^2$ such that on one side of a smooth surface $\Sigma = \{
y = 0 \}$ we take $Z=X$ and on the other side $Z=Y$.

In \cite{Marcel} the analysis of the bifurcation diagram of the
$2-$parameter family
\[
W_{\mu,\epsilon} = \left\{
      \begin{array}{ll}
X_{\mu} = \left(
              \begin{array}{c}
                1 \\
                x - \mu
\end{array}
      \right)
 & \hbox{if $y \geq 0$,} \\
          Y_{\epsilon} = \left(
              \begin{array}{c}
                  -1 \\
                -x^2 + \epsilon
\end{array}
      \right)& \hbox{if $y \leq 0.$}
      \end{array}
    \right.
\] of NSDS's presenting an invisible fold$-$cusp singularity is performed.
A challenging problem is to extend the analysis of \cite{Marcel} in
answering the following question: Can we find families of NSDS's
presenting fold$-$cusp singularities whose dynamics is richer than
the family exhibited in \cite{Marcel} ? In this paper such an
extension has been carried out.  By means of the positive answer to
the previous question, we are able to say that two parameters is not
enough to explain the birth of some new topological types around
$Z_{0,0,0}$.

In fact, ours results cover the study done in \cite{Marcel} and we
can obtain the bifurcation diagram presented in \cite{Marcel}
assuming $\beta= \mu^{2}$ and $\mu \leq 0$ in \eqref{eq fold-cusp 3
parametros inicio}. For example, the configuration in Figure
\ref{fig sem simetria 1} is not observed in \cite{Marcel} and is
present at the bifurcation diagram of \eqref{eq fold-cusp 3
parametros inicio}.

\begin{figure}[h!]
\begin{minipage}[b]{0.41\linewidth}
\begin{center}
\epsfxsize=3cm  \epsfbox{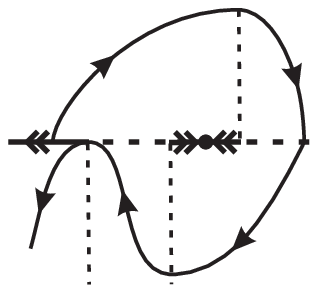}
\caption{\small{Configuration nearby $Z_{0,0,0}$ not observed in
\cite{Marcel}.}}\label{fig sem simetria 1}
\end{center}
\end{minipage} \hfill
\begin{minipage}[b]{0.58\linewidth}
\begin{center}
\epsfxsize=4cm  \epsfbox{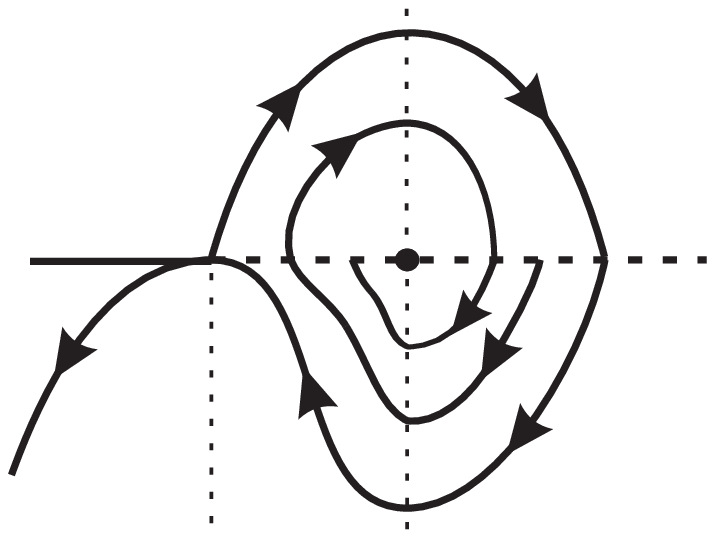} \caption{\small{The
local and the global bifurcation observed in \eqref{eq fold-cusp 3
parametros inicio} when $\beta>0$, $\lambda=\sqrt{\beta}$ and
$\mu=0$.}}\label{fig bump com o campo inicio}
\end{center}
\end{minipage}
\end{figure}

We mention two particular situations illustrated in Figure \ref{fig
bump com o campo inicio} that occur in \eqref{eq fold-cusp 3
parametros inicio} when $\beta > 0$. In this \textit{resonant}
configuration we note, simultaneously, a two$-$fold singularity
(which is a local phenomenon) and a loop passing through the visible
$\Sigma-$fold of $Y$ (which is a global phenomenon). 

\subsection{Statement of the Main Results}\label{secao enunciados}

Theorems 1, 2 and 3  pave the way for
the proof of Theorem A. Theorem $B$ is self contained. \\ 

\noindent {\bf Theorem 1.} {\it If $\mu=0$ in Equation \eqref{eq
fold-cusp 3 parametros inicio} then its bifurcation diagram in the
$(\lambda,\beta)-$plane contains essentially $17$ distinct phase
portraits (see Figure \ref{fig diag bif teo 1}).}

\vspace{.5cm}

It is easy to see that the cases covered by Theorem 1 do not
represent the full unfolding of the invisible fold$-$cusp singularity. 
Because of this, 
the next two theorems are necessary.\\

\noindent {\bf Theorem 2.} {\it If  $0 <\mu < \mu_0$ in Equation
\eqref{eq fold-cusp 3 parametros inicio} then its bifurcation
diagram in the $(\lambda,\beta)-$plane contains essentially $19$
distinct
 phase portraits (see Figure \ref{fig diag bif teo 2}).}

 \vspace{.5cm}

\noindent {\bf Theorem 3.} {\it If $-\mu_0 <\mu < 0$ in Equation
\eqref{eq fold-cusp 3 parametros inicio} then its bifurcation
diagram in the $(\lambda,\beta)-$plane contains essentially $19$
distinct
 phase portraits (see Figure \ref{fig diag bif teo 2}).}

\vspace{.5cm}

%
%
%
%
%
%
%
%
%
%

Finally, we are in position to state the main results of the paper.

\vspace{.5cm}

\noindent {\bf Theorem A.} {\it  The bifurcation diagram of Equation
\eqref{eq fold-cusp 3 parametros inicio} exhibits $55$ distinct
cases representing $23$ distinct phase portraits (see Figure
\ref{fig diagrama bifurcacao teo A}).}

\vspace{.5cm} \noindent {\bf Theorem B.} {\it The bifurcation
diagram of Equation \eqref{eq fold-cusp 2 parametros inicio}
 exhibits $11$ distinct  phase portraits (see
Figure \ref{fig diag bif teo B}).}


\vspace{.5cm}

The paper is organized as follows. In Section \ref{secao
preliminares} we present some basic elements on the theory of
NSDS's. In Sections \ref{secao prova teorema 1}, \ref{secao prova
teorema 2} and \ref{secao prova teorema 3} we pave the way for the
proofs of the main results of the paper (Theorems A and B). Section
\ref{secao prova teorema A} is devoted to prove Theorem A and
exhibit the Bifurcation Diagram of \eqref{eq fold-cusp 3 parametros
inicio}. In Section \ref{secao prova teorema B}, the proof of
Theorem B and the Bifurcation Diagram of \eqref{eq fold-cusp 2
parametros inicio} are presented and in Section \ref{secao
conclusao} some concluding remarks are discussed. In our paper we
follow basically the terminology and the approach of
\cite{Kuznetsov} or \cite{Marcel} and no one sophisticated tool is
needed.


\section{Preliminaries}\label{secao preliminares}

Let $K\subseteq \R ^{2}$ be a compact set such that $\partial K$ is
a smooth $1-$manifold and $\Sigma \subseteq K$ given by $\Sigma
=f^{-1}(0),$ where $f:K\rightarrow \R$ is a smooth function having
$0\in \R$ as a regular value (i.e. $\nabla f(p)\neq 0$, for any
$p\in f^{-1}({0}))$ such that $\partial K \cap \Sigma = \emptyset$
or $\partial K \pitchfork \Sigma$. Clearly the \textit{switching
manifold} $\Sigma$ is the separating boundary of the regions
$\Sigma^+=\{q\in K | f(q) \geq 0\}$ and $\Sigma^-=\{q \in K |
f(q)\leq 0\}$.\\ 

Designate by $\chi$ the space of $C^1 -$vector fields on $K$ endowed
with the $C^1-$topology. Call \textbf{$\Omega=\Omega(K,f)$} the
space of vector fields $Z: K \rightarrow \R ^{2}$ such that
\begin{equation}\label{eq Z}
 Z(x,y)=\left\{\begin{array}{l} X(x,y),\quad $for$ \quad (x,y) \in
\Sigma^+,\\ Y(x,y),\quad $for$ \quad (x,y) \in \Sigma^-,
\end{array}\right.
\end{equation}
where $X=(f_1,g_1)$, $Y = (f_2,g_2)$ are in $\chi.$ We write
$Z=(X,Y),$ which we will accept to be multivalued in points of
$\Sigma.$ We endow $\Omega$ with the pro\-duct $C^1-$topology. The
trajectories of $Z$ are solutions of  ${\dot q}=Z(q),$ which has, in
general, discontinuous righthand side. The basic results of
differential equations, in this context, were stated by Filippov in
\cite{Fi}.\\ 

\begin{definition} A \textbf{$\mathbf{k}-$parameter family} of
elements in $\Omega$ is a $C^1 -$mapping, with
$r>1$,$$\begin{array}{cccc}
  \zeta: & S^{k} & \longrightarrow & \Omega \\
   & \varrho=(\varrho_1,\varrho_2, \ldots, \varrho_{k}) & \mapsto & X_{\varrho} \\
\end{array}$$
where $S^{k} = [-\epsilon_1,\epsilon_1] \times
[-\epsilon_2,\epsilon_2] \times \ldots [-\epsilon_k,\epsilon_k]$
with $\epsilon_i > 0$, $i=1,2,\ldots,k$, sufficiently small.
\end{definition}

\begin{definition}\label{definicao C0-equiv}
We say that $W, \widetilde{W} \in \chi$ defined in open sets $U$ and
$\widetilde{U}$, respectively, are
$\mathbf{{C}^{0}-}$\textbf{orbitally equivalent} if there exists
an orientation preserving homeomorphism $h: U \rightarrow
\widetilde{U}$ that sends orbits of $W$ to orbits of
$\widetilde{W}$. Here, orbit of $W$ means the image of a solution of
$\dot{x}=W(x)$.
\end{definition}

\begin{definition}\label{definicao sigma-equivalencia}
Two non$-$smooth vector fields $Z=(X,Y), \,
\widetilde{Z}=(\widetilde{X},\widetilde{Y}) \in \Omega(K,f)$ defined
in open sets $U, \, \widetilde{U} \subset K$ and with switching
manifold $\Sigma$ are \textbf{$\mathbf{\Sigma-}$equivalent}  if
there exists an orientation preserving homeomorphism $h: U
\rightarrow \widetilde{U}$ that sends $\Sigma$ in $\Sigma$, the
orbits of  $X$ restrict to $U\cap\Sigma^+$ in the orbits of
$\widetilde{X}$ restrict to $\widetilde{U}\cap\Sigma^+$,  and the
orbits of  $Y$ restrict to $U\cap\Sigma^-$ in the orbits of
$\widetilde{Y}$ restrict to $\widetilde{U}\cap\Sigma^-$.
\end{definition}

Consider the notation \[X.f(p)=\left\langle \nabla f(p),
X(p)\right\rangle \quad \mbox{ and } \quad X^i.f(p)=\left\langle
\nabla X^{i-1}. f(p), X(p)\right\rangle, i\geq 2
\]
where $\langle . , . \rangle$ is the usual inner product in $\R^2$.

\begin{remark}
The vertical dotted lines present in almost all figures of this
paper represent the points $p \in K \subset \R^2$ where $X.f(p)=0$
or $Y.f(p)=0$
\end{remark}

We  distinguish the following regions on the discontinuity set
$\Sigma$:
\begin{itemize}
\item [(i)]$\Sigma^c\subseteq\Sigma$ is the \textit{sewing region} if
$(X.f)(Y.f)>0$ on $\Sigma^c$ .
\item [(ii)]$\Sigma^e\subseteq\Sigma$ is the \textit{escaping region} if
$(X.f)>0$ and $(Y.f)<0$ on $\Sigma^e$.
\item [(iii)]$\Sigma^s\subseteq\Sigma$ is the \textit{sliding region} if
$(X.f)<0$ and $(Y.f)>0$ on $\Sigma^s$.
\end{itemize}

Consider $Z \in \Omega.$ The \textit{sliding vector field}
associated to $Z$ is the vector field  $Z^s$ tangent to $\Sigma^s$
and defined at $q\in \Sigma^s$ by $Z^s(q)=m-q$ with $m$ being the
point of the segment joining $q+X(q)$ and $q+Y(q)$ such that $m-q$
is tangent to $\Sigma^s$ (see Figure \ref{fig def filipov}). It is
clear that if $q\in \Sigma^s$ then $q\in \Sigma^e$ for $-Z$ and then
we  can define the {\it escaping vector field} on $\Sigma^e$
associated to $Z$ by $Z^e=-(-Z)^s$. In what follows we use the
notation $Z^\Sigma$
for both cases.\\

\begin{figure}[!h]
\begin{center}
\psfrag{A}{$q$} \psfrag{B}{$q + Y(q)$} \psfrag{C}{$q + X(q)$}
\psfrag{D}{} \psfrag{E}{\hspace{1cm}$Z^\Sigma(q)$}
\psfrag{F}{\hspace{.7cm}$\Sigma^s$} \psfrag{G}{} \epsfxsize=5.5cm
\epsfbox{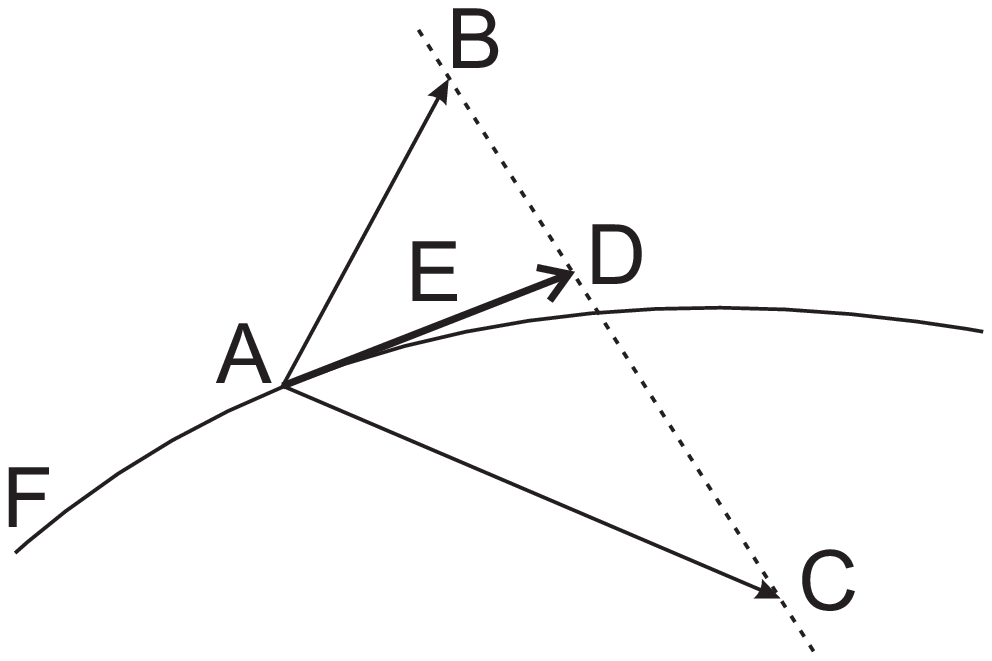} \caption{\small{Filippov's
convention.}} \label{fig def filipov}
\end{center}
\end{figure}

We say that $q\in\Sigma$ is a \textit{$\Sigma-$regular point} if
\begin{itemize}
\item [(i)] $(X.f(q))(Y.f(q))>0$
or
\item [(ii)] $(X.f(q))(Y.f(q))<0$ and $Z^{\Sigma}(q)\neq0$
(that is $q\in\Sigma^e\cup\Sigma^s$ and it is not an equilibrium
point of $Z^{\Sigma}$).\end{itemize}

The points of $\Sigma$ which are not $\Sigma-$regular are called
\textit{$\Sigma-$singular}. We distinguish two subsets in the set of
$\Sigma-$singular points: $\Sigma^t$ and $\Sigma^p$. Any $q \in
\Sigma^p$ is called a \textit{pseudo equilibrium of $Z$} and it is
characterized by $Z^{\Sigma}(q)=0$. Any $q \in \Sigma^t$ is called a
\textit{tangential singularity} and is characterized by
$Z^{\Sigma}(q) \neq 0$ and $(X.f(q))(Y.f(q)) =0$ ($q$ is a contact
point).\\

We say that a point $p_0 \in \Sigma$ is a \textit{$\Sigma-$fold
point} of $X$ if $X.f(p_0)=0$ but $X^{2}.f(p_0)\neq0.$ Moreover,
$p_0\in\Sigma$ is a \textit{visible} (respectively {\it invisible})
$\Sigma-$fold point of $X$
 if $X.f(p_0)=0$ and $X^{2}.f(p_0)> 0$
(respectively $X^{2}.f(p_0)< 0$). We say that a point $q_0 \in
\Sigma$ is a \textit{$\Sigma-$cusp point} of $Y$ if
$Y.f(q_0)=Y^{2}.f(q_0)=0$ and $Y^{3}.f(q_0)\neq0$. Moreover, a
$\Sigma-$cusp point $q_0$ of $Y$ is of \textit{kind 1} (respectively
{\it kind 2})
 if  $Y^{3}.f(q_0)> 0$
(respectively $Y^{3}.f(q_0)< 0$).
In particular, $\Sigma-$fold and $\Sigma-$cusp points are tangential singularities.\\

A pseudo equilibrium $q \in \Sigma^p$ is a \textit{$\Sigma-$saddle}
provided that one of the following conditions is satisfied: (i)
$q\in\Sigma^e$ and $q$ is an attractor for $Z^{\Sigma}$ or (ii)
$q\in\Sigma^s$ and $q$ is a repeller for $Z^{\Sigma}$. A pseudo
equilibrium $q\in\Sigma^p$ is a $\Sigma-$\textit{repeller} (resp.
$\Sigma-$\textit{attractor}) provided $q\in\Sigma^e$ (respectively
$q \in \Sigma^s$) and $q$ is a repeller (respectively, attractor)
equilibrium point for
$Z^{\Sigma}$.\\ 

Given a point $q \in \Sigma^c$, we denote by $r(q)$ the straight
line through $q + X(q)$ and $q + Y(q)$.

\begin{definition}\label{definicao virtual pseudo equilibrio}
The $\Sigma-$regular points $q \in \Sigma^c$ such that either $\{
X(q), Y(q) \}$ is  a li\-nearly dependent set or $r(q) \cap \Sigma =
\emptyset$ are called \textbf{virtual pseudo equilibria}.
\end{definition}

Let us consider a smooth autonomous vector field $W$ defined in an
open set $U$. Then we denote its flow by $\phi_{W}(t,p)$. In this
way,
$$
\left\{
  \begin{array}{ll}
    \dfrac{d}{dt}\phi_{W}(t,p) = W(\phi_{W}(t,p)),\\\\
    \phi_{W}(0,p)=p,
  \end{array}
\right.
$$
where $t \in I= I(p,W)\subset \R$, an interval depending on $p \in
U$ and $W$.

The following definition was stated in \cite{Marcel}, pg 1971.

\begin{definition}\label{definicao trajetorias}
The \textbf{local trajectory} of a NSDS given by \eqref{eq Z} is
defined as follows:
\begin{itemize}
\item For $p \in \Sigma^+$ and $p \in \Sigma^-$ the trajectory is
given by $\phi_{Z}(t,p)=\phi_{X}(t,p)$ and
$\phi_{Z}(t,p)=\phi_{Y}(t,p)$ respectively, where $t \in I$.

\item For $p \in \Sigma^{c}$ such that $X.f(p)>0$, $Y.f(p)>0$ and  taking the
origin of time at $p$, the trajectory is defined as
$\phi_{Z}(t,p)=\phi_{Y}(t,p)$ for $t \in I \cap \{ t \leq 0 \}$ and
$\phi_{Z}(t,p)=\phi_{X}(t,p)$ for $t \in I \cap \{ t \geq 0 \}$. For
the case $X.f(p)<0$ and $Y.f(p)<0$  the definition is the same
reversing time.

\item For $p \in \Sigma^e \cup \Sigma^s$ such that $Z^{\Sigma}(p) \neq 0$ we define
$\phi_{Z}(t,p)=\phi_{Z^{\Sigma}}(t,p)$ for $t\in I$.

\item For $p \in \partial \Sigma^c \cup \partial \Sigma^e \cup
\partial \Sigma^s$ such that the definitions of trajectories  for points
in a full neighborhood of $p$ in $\Sigma$ can be extended to $p$
and coincide, the trajectory through $p$ is this trajectory. 


\item For any other point $\phi_{Z}(t,p)=p$ for all $t \in \R$.
This is the case of points in $ \partial \Sigma^c \cup \partial
\Sigma^e \cup
\partial \Sigma^s$ which are not regular tangential singu\-la\-ri\-ties 
and the equilibrium points of $X$ in $\Sigma_+$, of $Y$ in
$\Sigma_-$ and of $Z^{\Sigma}$ in $\Sigma^s \cup \Sigma^e$.
\end{itemize}
\end{definition}

\begin{definition}\label{definicao orbita}
The \textbf{local orbit$\mathbf{-}$arc} of the vector field $W$
passing through a point $p \in U$ is the set $\gamma_{W} (p) =  \{
\phi_{W}(t,p) : t \in I \}$.
\end{definition}

 Since we are dealing with autonomous systems, from now on
we will use trajectory and orbit$-$arc indistinctly when there is no
danger of confusion.
%

\begin{definition}\label{definicao canard cycles} Consider
 $Z=(X,Y) \in \Omega.$
\begin{enumerate}
\item A  \textbf{canard cycle} is a closed curve $\Gamma= \displaystyle{\bigcup_{i=1}^{n}}\sigma_i$ composed
by the union of orbit$-$arcs $\sigma_i$, $i=1,\hdots,n$, of $X
|_{\Sigma^{+}}$, $Y |_{\Sigma^{-}}$ and $Z^{\Sigma}$ such that:

  \begin{itemize}
  \item Either there exists $i_0 \subset \{ 1, \hdots, n\}$ with $\sigma_{i_{0}}
  \subset \gamma_{X}$ (respectively $\sigma_{i_{0}} \subset \gamma_{Y}$) and then
there exists $j \neq i_0$ with $\sigma_j \subset \gamma_{Y} \cup
\gamma_{Z^{\Sigma}}$ (respectively $\sigma_j \subset \gamma_{X} \cup
\gamma_{Z^{\Sigma}}$), or  $\Gamma$ is composed by a single arc
$\sigma_i$ of $Z^{\Sigma}$;


  \item the transition between arcs of $X$
  and arcs of $Y$ occurs in sewing points;

  \item the transition between arcs of $X$
  (or $Y$) and arcs of $Z^{\Sigma}$ occurs through
  $\Sigma-$fold points or regular points in the escaping or sliding arc, respecting the orientation. Moreover
if $\Gamma\neq\Sigma$ then there exists at least one visible
$\Sigma-$fold point on each connected component of
$\Gamma\cap\Sigma$.
  \end{itemize}

\item A canard cycle $\Gamma$ of $Z$ is
of:

  \begin{itemize}
  \item  \textbf{Kind
  I} if $\Gamma$ meets $\Sigma$ just in sewing points;

  \item \textbf{Kind
  II} if $\Gamma = \Sigma$;

  \item \textbf{Kind
  III} if $\Gamma$ contains at least one visible $\Sigma-$fold point of $Z$.
  \end{itemize}

In Figures \ref{fig canard I}, \ref{fig canard II} and \ref{fig
canard} arise canard cycles of kind I, II and III respectively.

\item A canard cycle $\Gamma$ of $Z$ is \textbf{hyperbolic} if one
of the following conditions are satisfied:

  \begin{itemize}
  \item[(i)] $\Gamma$ is of kind I and $\eta'(p) \neq 1$,
  where $\eta$ is the first return map defined on a segment $T$
  with $p\in T\pitchfork\gamma$;

  \item[(ii)] $\Gamma$ is of kind II;

  \item[(iii)] $\Gamma$ is of kind III,
  $\overline{\Sigma^e} \cap \overline{\Sigma^s} \cap \Gamma = \emptyset$
  and either $\Gamma\cap\Sigma\subseteq\Sigma^c\cup\Sigma^e\cup \Sigma^t$
  or $\Gamma\cap\Sigma\subseteq\Sigma^c\cup\Sigma^s \cup \Sigma^t$.
  \end{itemize}
\end{enumerate}
\end{definition}

\begin{figure}[h!]
\begin{minipage}[b]{0.311\linewidth}
\begin{center}
\epsfxsize=3.3cm \epsfbox{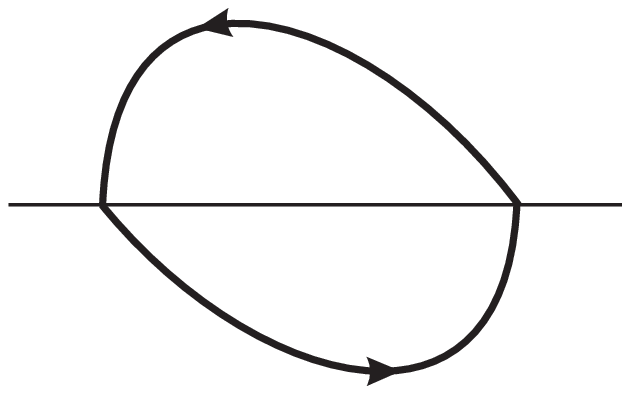} \caption{\small{Canard
cycle of kind I.}} \label{fig canard I}
\end{center}
\end{minipage} \hfill
\begin{minipage}[b]{0.311\linewidth}
\begin{center}
\psfrag{A}{$\Sigma=\Gamma$}\epsfxsize=2.6cm
\epsfbox{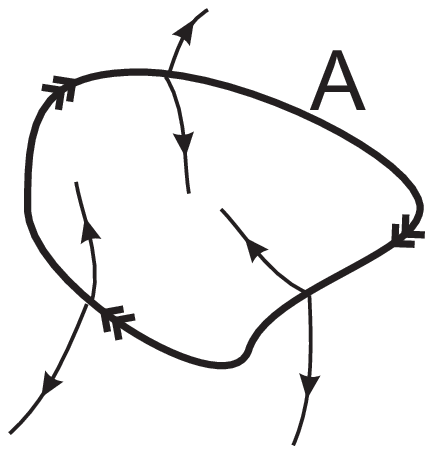} \caption{\small{Canard cycle of kind
II.}}\label{fig canard II}
\end{center}
\end{minipage} \hfill
\begin{minipage}[b]{0.35\linewidth}
\begin{center}
\epsfxsize=4cm  \epsfbox{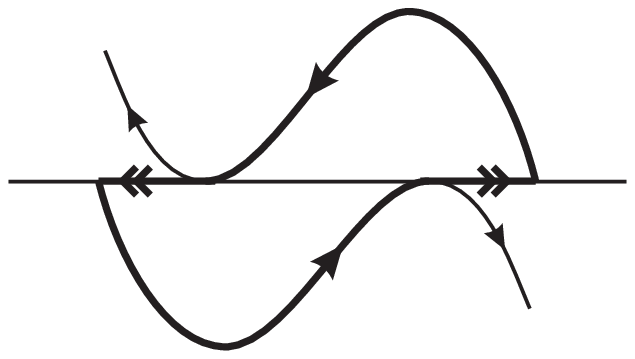}  \caption{\small{Canard
cycle of kind III.}}\label{fig canard}
\end{center}
\end{minipage}
\end{figure}

%

%
%
%
%
%

  Following Theorem $2$ of \cite{vishik}, locally it is
 possible to consider $f(x,y)=y$ and conclude that any
 $X \in \Sigma^{+}$
presenting a $\Sigma-$fold point is $C^0 -$orbitally equivalent to
the normal form $X_0(x,y)= (\rho_1, \rho_2 x)$ with
$\rho_1=\pm1$ and $\rho_2= \pm1$.\\

Following \cite{T1}, we can take $f(x,y)=y$ and derive that any $Y
\in \Sigma^{-}$ presenting a $\Sigma-$cusp point  is
$C^{0}-$orbitally equivalent to the
 normal form $Y_0(x,y)= (\rho_3, \rho_4 x^2)$ with
$\rho_3=\pm1$ and $\rho_4= \pm1$.

\begin{lemma}\label{lema equivalencia} Let $Z=(X,Y) \in \Omega$
presenting a fold$-$cusp singularity,   then $Z$ is
$\Sigma-$equivalent to the standard form
$Z_0=Z_{\rho_1,\rho_2,\rho_3,\rho_4}$ given by
\begin{equation}\label{eq fold-cusp geral}
Z_\rho=Z_{\rho_1,\rho_2,\rho_3,\rho_4} = \left\{
      \begin{array}{ll}
        X_{\rho_1,\rho_2} = \left(
              \begin{array}{c}
                \rho_1  \\
               \rho_2 x
\end{array}
      \right)
 & \hbox{if $y \geq 0$,} \\
        Y_{\rho_3,\rho_4} =  \left(
              \begin{array}{c}
                \rho _3  \\
               \rho_4 x^2
\end{array}
      \right)& \hbox{if $y \leq 0$}
      \end{array}
    \right.
\end{equation}where $\rho_1,\rho_2,\rho_3,\rho_4 = \pm \, 1$.
\end{lemma}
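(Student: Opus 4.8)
The plan is to combine the two normal-form results recalled just before the statement---Vishik's for a $\Sigma$-fold and the one from \cite{T1} for a $\Sigma$-cusp---by pasting the two resulting homeomorphisms along $\Sigma$. First, since $0$ is a regular value of $f$, there is a diffeomorphism from a neighborhood of the fold-cusp point $p_0$ carrying $p_0$ to the origin and $\Sigma$ to $\{y=0\}$; composing with the reflections $(x,y)\mapsto(x,-y)$ and/or $(x,y)\mapsto(-x,y)$ if necessary, we may take it orientation preserving and with $\Sigma^{+}$ sent onto $\{y\ge 0\}$, so it is a $\Sigma$-equivalence and we may assume from the outset that $f(x,y)=y$ and $p_0=0$. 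In these coordinates the four signs are read off as invariants of the germ: writing $X=(X_1,X_2)$ and $Y=(Y_1,Y_2)$ we have $X_2(0)=Y_2(0)=0$ while $X_1(0)\neq 0$ and $Y_1(0)\neq 0$ (forced by $X^{2}.f(0)\neq 0$ and $Y^{3}.f(0)\neq 0$), and one sets $\rho_1=\sgn X_1(0)$, $\rho_3=\sgn Y_1(0)$, while $\rho_2$ is determined by the visibility of the fold (the sign of $X^{2}.f(0)$) and $\rho_4$ by the kind of the cusp (the sign of $Y^{3}.f(0)$); these are exactly the signs appearing in the target $Z_{\rho_1,\rho_2,\rho_3,\rho_4}$.

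Next I would apply the two cited results in their $\Sigma$- and $\Sigma^{\pm}$-preserving form (the form in which they are used to build $\Sigma$-equivalences). Restricting Vishik's homeomorphism to a closed upper half-neighborhood of $0$ gives an orientation-preserving homeomorphism $h_{+}$ onto a half-neighborhood of $0$, with $h_{+}(0)=0$ and $h_{+}(\Sigma)=\Sigma$, taking the orbits of $X$ in $\Sigma^{+}$ to those of $X_0=X_{\rho_1,\rho_2}$; likewise \cite{T1} gives a homeomorphism $h_{-}^{0}$ on a closed lower half-neighborhood taking the orbits of $Y$ in $\Sigma^{-}$ to those of $Y_0=Y_{\rho_3,\rho_4}$. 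These paste together into a single homeomorphism precisely when their traces $\varphi:=h_{+}|_{\Sigma}$ and $\varphi_0:=h_{-}^{0}|_{\Sigma}$---both orientation-preserving self-homeomorphisms of a small interval of $\Sigma$ fixing $0$---coincide, which a priori they do not. To force this I would correct $h_{-}^{0}$ on the $Y_0$ side: every orbit of $Y_0=(\rho_3,\rho_4 x^{2})$ meets $\{y=0\}$ in exactly one point, because along it $y=\tfrac{\rho_4}{3\rho_3}x^{3}+c$ is strictly monotone in $x$, so near $0$ the partition of $\{y\le 0\}$ into $Y_0$-orbit arcs is a topologically trivial product whose leaf space is identified, via ``arc $\mapsto$ arc $\cap\,\Sigma$'', with a neighborhood of $0$ in $\Sigma$. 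Prescribing the boundary value $\varphi\circ\varphi_0^{-1}$ and extending it arc by arc (for instance affinely along each arc in a suitable coordinate) produces a homeomorphism $g$ of the lower half-neighborhood that maps the $Y_0$-foliation to itself, fixes $0$, preserves $\Sigma$, and satisfies $g|_{\Sigma}=\varphi\circ\varphi_0^{-1}$; then $h_{-}:=g\circ h_{-}^{0}$ still takes the orbits of $Y$ to those of $Y_0$ and has $h_{-}|_{\Sigma}=\varphi=h_{+}|_{\Sigma}$.

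Finally, by the pasting lemma $h:=h_{+}\cup h_{-}$ is a homeomorphism of a full neighborhood $U$ of $0$ onto a neighborhood of $0$, with $h(\Sigma)=\Sigma$ and $h(\Sigma^{\pm})=\Sigma^{\pm}$; it is orientation preserving, being so on the connected set $U\cap\Sigma^{+}$, and it carries the orbits of $X|_{\Sigma^{+}}$ to those of $X_0|_{\Sigma^{+}}$ and the orbits of $Y|_{\Sigma^{-}}$ to those of $Y_0|_{\Sigma^{-}}$, so it realizes a $\Sigma$-equivalence between $Z=(X,Y)$ and $Z_0=Z_{\rho_1,\rho_2,\rho_3,\rho_4}$. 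I expect the genuine work to be the middle step---forcing the two one-sided homeomorphisms to agree along $\Sigma$---rather than quoting the normal forms or performing the final pasting. What makes that step succeed is the asymmetry between the two sides: the cusp side is flexible, in the sense that the $\Sigma$-trace of an orbit-preserving homeomorphism there can be made to be any orientation-preserving self-homeomorphism of $\Sigma$ fixing $0$, whereas the fold side is comparatively rigid (for an invisible fold its trace is constrained to conjugate the fold return involution on $\Sigma$ to $x\mapsto -x$); since the flexible side can absorb whatever the rigid side imposes, the matching can always be arranged. This is exactly the style of construction of homeomorphisms realizing $\Sigma$-equivalences carried out in \cite{Marcel}.
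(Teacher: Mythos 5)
Your proposal is sound, but it is organized differently from the paper's own argument. The paper does not paste two pre-existing one-sided normal-form homeomorphisms; it builds the equivalence directly and simultaneously on both sides: it identifies the two fold--cusp points, fixes a cross-section $T$ to $Y$ in $\Sigma^-$ identified with $\widetilde T$ by arc-length, and then propagates this single identification by following orbit arcs --- backward along $Y$ to $\Sigma$, across $\Sigma^+$ along $X$ using the fold return (hitting times controlled by the Implicit Function Theorem), and forward along $Y$ back to $T$ --- identifying corresponding arcs by arc-length. Because the trace on $\Sigma$ is generated once by this propagation and shared by both half-planes, the compatibility along $\Sigma$ that you must engineer by hand never arises there. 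Your route instead quotes the $\Sigma$-preserving boundary classifications (Vishik for the fold, \cite{T1} for the cusp) and isolates the genuine difficulty as a gluing problem on $\Sigma$, which you resolve correctly by noting that near the cusp every orbit of $Y_{\rho_3,\rho_4}$ meets $\Sigma$ exactly once (e.g. the explicit straightening $(x,y)\mapsto\bigl((x^3-3\rho_3 y/\rho_4)^{1/3},y\bigr)$ makes the lower foliation a product over $\Sigma$), so the cusp side can absorb the mismatch $\varphi\circ\varphi_0^{-1}$; given the paper's Definition of $\Sigma$-equivalence (only the two one-sided orbit foliations and $\Sigma$ must be respected), the pasting then suffices. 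What each buys: your argument is more modular and generalizes to any tangency whose half-plane foliation is a product over $\Sigma$, but it leans on the quoted theorems being available in $\Sigma$-preserving, orientation-preserving form; the paper's construction is more explicit and self-contained, producing the homeomorphism (and its boundary trace) in one pass. A cosmetic point only: since $X^2.f(0)$ has the sign of $\rho_1\rho_2$ in the normal form, $\rho_2$ is fixed by the visibility type together with $\rho_1$ (and likewise $\rho_4$ alone encodes the cusp kind), which is what you implicitly use.
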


Observe that the values of $\rho_i$, $i=1,2,3,4$, in Lemma \ref{lema
equivalencia} depend on the orientation of $X$ and $Y$. In
Subsection \ref{secao prova lema equivalencia} we prove Lemma
\ref{lema equivalencia}, i.e., we exhibit the homeomorphism that
characterizes the equivalence between any fold$-$cusp singularity
and the standard form given by \eqref{eq fold-cusp geral}.\\

Consider 
 $Z^{ivb,k1}_0,
Z^{vis,k2}_0 \in \Omega$ written in the following standard forms
(similar forms were stated in Section 12 of \cite{Marcel}):
\begin{equation}\label{eq fold-cusp invisivel inicio}
Z^{ivb,k1}_0 = \left\{
      \begin{array}{ll}
        X^{ivb}_0 = \left(
              \begin{array}{c}
                1 \\
               -x
\end{array}
      \right)
 & \hbox{if $y \geq 0$,} \\
        Y^{k1}_0 =  \left(
              \begin{array}{c}
                -1 \\
              -x^2
\end{array}
      \right)& \hbox{if $y \leq 0$, and}
      \end{array}
    \right.
\end{equation}
\begin{equation}\label{eq fold-cusp visivel inicio}
Z^{vis,k2}_0 = \left\{
      \begin{array}{ll}
        X^{vis}_0 = \left(
              \begin{array}{c}
                1 \\
               x
\end{array}
      \right)
 & \hbox{if $y \geq 0$,} \\
        Y^{k2}_0 =  \left(
              \begin{array}{c}
                1 \\
               -x^2
\end{array}
      \right)& \hbox{if $y \leq 0$.}
      \end{array}
    \right.
\end{equation}
Note that $X^{ivb}_0$ presents an invisible $\Sigma-$fold point on
its phase portrait, $X^{vis}_0$ presents a visible $\Sigma-$fold
point, $Y^{k1}_0$ presents a  $\Sigma-$cusp point of kind 1 and
$Y^{k2}_0$ presents a $\Sigma-$cusp point of kind 2. 
 Moreover, in  \eqref{eq fold-cusp invisivel inicio} we made $\rho_1
= 1$ and $\rho_2 = \rho_3 = \rho_4 = -1$ in \eqref{eq fold-cusp
geral} and in
 \eqref{eq fold-cusp visivel inicio} we made $\rho_1 = \rho_2 = \rho_3 = 1$ and $\rho_4 = -1$ in
\eqref{eq fold-cusp geral}. For simplicity we restrict our study to
the normal forms given above, i.e, \eqref{eq fold-cusp invisivel
inicio} and
 \eqref{eq fold-cusp visivel inicio}. All the other choices on the values of $\rho_i$, $i=1,2,3,4$ in \eqref{eq fold-cusp geral} are treated
similarly.

The main problem is to exhibit the bifurcation diagram of
$Z^{\tau,\rho}_0$  where $\tau=ivb$ or
 $vis$ and $\rho=k1$ or $k2$.

%
%
%
%
%
%

In order to detect a larger range of topological behaviors near an
invisible fold$-$cusp singularity we have to refine the analysis
done in \cite{Marcel}. This refinement can be obtained adding a bump
function on the expression of the NSDS.

Denote \[ F(x) = \int g_2(x,\beta,\mu) dx  = \frac{x^3}{3} - \beta x
+ B(x,\beta,\mu) + c_0 , \] where $c_0=- 2 \beta \sqrt{\beta}/3$ and
$g_2$ is the second coordinate of $Y_{\beta,\mu}$ in \eqref{eq
fold-cusp 3 parametros inicio}. The $C^1 -$bump function $B$
satisfies the following properties when $\beta > 0$:

\begin{itemize}

\item It has exactly one point of local minimum in the interval
$(- \sqrt{\beta}, 4\sqrt{\beta})$. This point is located at $x_0 =
\sqrt{\beta}$.

\item $F(3 \sqrt{\beta} + \mu) = 0$ (see Figure \ref{fig bump
function}).  By means of this last property the orbit$-$arc of
$Y_{\beta,\mu}$ that has a quadratic contact to $\Sigma$ at $q_0=(-
\sqrt{\beta},0)$ turns to collide with $\Sigma$ at the point $q_1=(3
\sqrt{\beta} + \mu,0)$. So, the first coordinate of $q_1$ is bigger
(respectively, smaller) than $3 \sqrt{\beta}$ as $\mu$ is bigger
(respectively, smaller) than $0$.

\end{itemize}

\begin{figure}[h!]
\begin{center}\psfrag{A}{$-\sqrt{\beta}$} \psfrag{B}{$0$} \psfrag{C}{$\sqrt{\beta}$}
\psfrag{D}{$$} \psfrag{E}{$3\sqrt{\beta}$}
\psfrag{F}{$4\sqrt{\beta}$}
\psfrag{G}{}
\epsfxsize=4.5cm \epsfbox{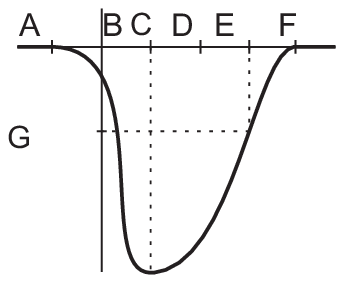}
\caption{\small{Graph of $B$.}} \label{fig bump
function}\end{center}
\end{figure}

\begin{remark}\label{obs parametro mu} It is worth saying that the parameter $\mu$ breaks the strong
proportionality between the roots of $g_2(x, \beta,0)$.  At the
limit value $\mu_0=0$, $Z_{\lambda,\beta,\mu}$ presents distinct
topological behaviors for $\mu < \mu_0$ or $\mu >
\mu_0$.\end{remark}

\begin{remark}\label{obs 1 referee}
Note that in Equations \eqref{eq fold-cusp 3 parametros inicio} and
\eqref{eq fold-cusp 2 parametros inicio} the perturbations
considered depend only on the variable $x$. The local geometry of a
NSDS presenting a cusp$-$fold singularity becomes rather different
if perturbations involving the variables $x$ and
$y$ are admitted.\\
\end{remark}

\subsection{Global Bifurcation}\label{secao global bifurcation}

As said before, the configuration illustrated in Figure \ref{fig
bump com o campo inicio} plays a very important role in our
analysis. The configuration of this figure is reached from \eqref{eq
fold-cusp 3 parametros inicio}, by taking $\beta>0$,
$\lambda=\sqrt{\beta}$ and $\mu=0$. In this section we deal with
this global phenomenon.

Emphasizing, let $Z_0=(X_0,Y_0) \in \Omega$  having the following
properties:

\begin{itemize}

\item The discontinuity set $\Sigma$ is represented by $f(x,y)=y$.

\item Consider $X_0=(f_{1}^{0},g_{1}^{0})$ and $Y_0=(f_{2}^{0},g_{2}^{0})$
and assume that $f_{1}^{0}(p) >0$ if $p \in \Sigma^+$ and
$f_{2}^{0}(p) < 0$ if $p \in \Sigma^-.$

\item $q_0 \in \Sigma$ is a visible $\Sigma-$fold point of $Y_0$ and
$X.f(q_0) > 0$.

\item The orbit $\gamma_{X_0}(q_0)$  of $X_0$ through $q_0$ meets
transversally $\Sigma$ at a point $q_1$.

\item The orbit $\gamma_{Y_0}(q_1)$  of $Y_0$ through $q_1$ meets
tangentially $\Sigma$ at  $q_0$. Call $\Gamma$ the degenerate canard
cycle composed by $\gamma_{X_0}(q_0)$  and $\gamma_{Y_0}(q_1)$. Let
$M$ be the compact region in the plane bounded by $\Gamma.$
\end{itemize}

\subsubsection{Transition Fold Map}\label{secao transition fold
map}

As $q_0 \in \Sigma$ is a visible $\Sigma -$fold point of $Y_0$, we
may assume (see \cite{vishik}) coordinates around $q_0$ such that
the system is represented by $(\dot x,\dot y)=(-1,x)$ with
$q_0=(0,0)$. The solutions of this differential equation are given
by:
$$\phi_{a,b}(t)=(-t + a, -(t^{2}/2) + at + b).$$ The orbit$-$arc
$\phi_0$ through $(0,0)$ is represented by $\phi_{0}(t)=(-t ,
-t^{2}/2)$.

Let $\delta$ be a very small positive number. We construct the
\textit{Transition Map} $\xi: L_{1} \rightarrow L_0$, from $L_1= \{
(x,y), \  y=-\delta, \ x \geq \sqrt{2 \delta} \}$ to $L_0=\{(x,0), x
\geq 0\}$, following the orbits of $Y_0$ (see Figure \ref{fig
primeiro retorno}). The curve $L_1$ is transverse to $Y_0$ at
$p_{\delta}= (  \sqrt{2 \delta} , -\delta).$ Since the solutions
$\phi_{\delta}$ through $(\overline{x},-\delta) \in L_1$ meet
$\Sigma = \{ y=0 \}$ at time $t= \overline{x}\pm \sqrt{
\overline{x}^{\,2} - 2 \delta}$ we obtain that
$\xi(\overline{x})=\sqrt{ \overline{x}^{\,2} - 2 \delta}$ and $\xi$
 is an homeomorphism. Moreover, $\xi^{-1} (x)= \sqrt{x^{2}+ 2 \delta}$, $\xi^{-1}$ is  differentiable  at $0$ and $(\xi^{-1})'(0)=0$.

\begin{figure}[h!]
\begin{center}\psfrag{1}{$L_0$} \psfrag{2}{$L_2$} \psfrag{3}{$L_1$}
\psfrag{4}{\hspace{-1cm}$\gamma_{X}(q_0)$}
\psfrag{5}{$\gamma_{Y}(q_1)$} \psfrag{6}{$q_0$}\psfrag{7}{$p$}
\psfrag{8}{$L_3$}
\psfrag{G}{}
\epsfxsize=4cm \epsfbox{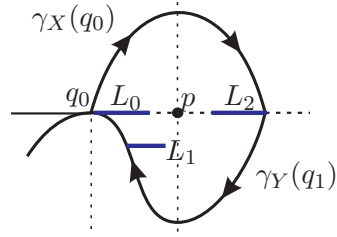}
\caption{\small{First Return Map around the two$-$fold singularity
$p$.}} \label{fig primeiro retorno}\end{center}
\end{figure}

\subsubsection{First Return Map associated to $\Gamma$}\label{secao primeiro retorno em
Gamma}

Let $\varrho_X$ be the transition map from $L_0$ to $L_2 \subset
\Sigma$ via $X_0-$trajectories and $\varrho_Y$ be the transition map
from $L_2$ to $L_1$  via $Y_0-$trajectories (see Figure \ref{fig
primeiro retorno}). Observe that the linear part of the composition
$\varrho_Y \circ \varrho_X$ is nonzero due to the transversality
conditions of the problem. For simplicity, let
$J_{\epsilon}=[0,\epsilon) \times \{ 0 \}$ be a small semi-open
interval of $\Sigma$.

So the First Return Map of $Z_0$ at $q_0$ is $\kappa (x) = (\xi\circ
\varrho_Y \circ \varrho_X)(x)$ for $x \in J_{\epsilon}$.
Its inverse $\kappa^{-1}$ is a differentiable map at $0$ and  satisfies $(\kappa^{-1})' (0) =0.$
So, $\Gamma$ locally repeals the orbits of $Z_0$ closed to $\Gamma$
and in the interior of $\Gamma$.

In conclusion, if $Z$ is very close to $Z_0$ in $\Omega$ in such a
way that it possesses a canard cycle nearby $\Gamma$ then it is a
hyperbolic repeller canard cycle. Under some other  conditions on
$Z_0$ (reversing the directions of $X_0$ and $Y_0$) we can derive
that such canard cycle is an attractor.

\subsubsection{Analysis around the two$-$fold singularity}\label{secao analise na fold-fold
singularity}

 In Equation \eqref{eq fold-cusp 3 parametros
inicio}, for $\beta>0$, it is possible to define a \textit{First
Return Map} $\psi_{\lambda}^{\mu}: (\sqrt{\beta}, 3 \sqrt{\beta} +
\mu) \rightarrow (\sqrt{\beta}, 3 \sqrt{\beta})$, associated to
$Z_{\lambda,\beta,\mu}$, given by
\[
\psi_{\lambda}^{\mu}(x)=(\varrho_{ X_{\lambda}} \circ
\varrho_{Y_{\mu,\beta}})(x)
\]where $\varrho_{Y_{\mu,\beta}}(x)$ is the
first return to $\Sigma$ of the orbit$-$arc of $Y_{\mu,\beta}$ that
passes through $p=(x,0)$ and $\varrho_{X_{\lambda}}(\widetilde{x})$
is the first return to $\Sigma$ of the orbit$-$arc of $X_{\lambda}$
that passes through $p=(\widetilde{x},0)$.

\begin{lemma}\label{lema hipotese H}
If $\beta>0$, $\lambda=\sqrt{\beta}$ and $\mu=0$ in \eqref{eq
fold-cusp 3 parametros inicio} then (see Figure \ref{fig bump com o
campo inicio}) the First Return Map $\psi_{\lambda}^{\mu}(x)$
satisfies
\begin{itemize}
\item[(i)] $\psi_{\lambda}^{0}(x) < x \, , \, \forall x \in (\sqrt{\beta}, 3
\sqrt{\beta})$ and
\item[(ii)] $|(\psi_{\lambda}^{0})'(\sqrt{\beta})| \neq 1$.
\end{itemize}
\end{lemma}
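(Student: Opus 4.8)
The plan is to turn both assertions into one-variable computations by exploiting the explicit first integrals of the two branches, so throughout fix $\lambda=\sqrt{\beta}$. The field $X_{\lambda}=(1,-x+\sqrt{\beta})$ is affine, so its orbits are the level curves of $H_{X}(x,y)=y+\frac{1}{2}(x-\sqrt{\beta})^{2}$; hence for $\widetilde{x}<\sqrt{\beta}$ the $X_{\lambda}$-orbit through $(\widetilde{x},0)$ leaves $\Sigma$ upwards, passes over the invisible fold at $x=\sqrt{\beta}$, and returns to $\Sigma$ at $2\sqrt{\beta}-\widetilde{x}$, i.e. $\varrho_{X_{\lambda}}(\widetilde{x})=2\sqrt{\beta}-\widetilde{x}$ is the reflection about $x=\sqrt{\beta}$. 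For $Y_{\beta,0}=(-1,g_{2})$ the orbits are the level curves of $H_{Y}(x,y)=y-F(x)$, with $F(x)=\frac{x^{3}}{3}-\beta x+B(x,\beta,0)+c_{0}$. From the properties of the bump function $B$ (see Figure \ref{fig bump function}) I would first record that $F$ is strictly decreasing on $[-\sqrt{\beta},\sqrt{\beta}]$, strictly increasing on $[\sqrt{\beta},4\sqrt{\beta}]$, $F(\sqrt{\beta})$ is the minimum value, and $F(-\sqrt{\beta})=F(3\sqrt{\beta})=0$. Consequently, for $x\in(\sqrt{\beta},3\sqrt{\beta})$ the $Y_{\beta,0}$-orbit through $(x,0)$ dips into $\Sigma^{-}$, attains its lowest point above $x=\sqrt{\beta}$, and returns to $\Sigma$ at the unique point $\varrho_{Y_{\beta,0}}(x)\in(-\sqrt{\beta},\sqrt{\beta})$ with $F(\varrho_{Y_{\beta,0}}(x))=F(x)$. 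Composing, $\psi_{\lambda}^{0}(x)=\varrho_{X_{\lambda}}(\varrho_{Y_{\beta,0}}(x))=2\sqrt{\beta}-\varrho_{Y_{\beta,0}}(x)$, so $\psi_{\lambda}^{0}$ is increasing and maps $(\sqrt{\beta},3\sqrt{\beta})$ onto itself with $\psi_{\lambda}^{0}(\sqrt{\beta}^{+})=\sqrt{\beta}$ and $\psi_{\lambda}^{0}((3\sqrt{\beta})^{-})=3\sqrt{\beta}$.

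For item (i) the reduction is immediate: since $\varrho_{Y_{\beta,0}}(x)$ and $2\sqrt{\beta}-x$ both lie in $(-\sqrt{\beta},\sqrt{\beta})$, where $F$ is strictly decreasing,
\[
\psi_{\lambda}^{0}(x)<x \iff \varrho_{Y_{\beta,0}}(x)>2\sqrt{\beta}-x \iff F\big(\varrho_{Y_{\beta,0}}(x)\big)<F(2\sqrt{\beta}-x) \iff F(x)<F(2\sqrt{\beta}-x).
\]
Writing $x=\sqrt{\beta}+s$ with $s\in(0,2\sqrt{\beta})$, assertion (i) is thus equivalent to $F(\sqrt{\beta}+s)<F(\sqrt{\beta}-s)$, i.e. $F$ climbs out of its minimum more slowly on the right than on the left. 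This is an inequality between two explicit piecewise polynomials (both sides vanishing at the endpoint $s=2\sqrt{\beta}$), and it will hold precisely because the coefficients defining $B_{1}$ and $B_{2}$ have been tuned for it.

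For item (ii) I would use the chain rule, $(\psi_{\lambda}^{0})'(\sqrt{\beta})=-(\varrho_{Y_{\beta,0}})'(\sqrt{\beta})$, and compute the derivative of $\varrho_{Y_{\beta,0}}$ at the fold from the relation $F(\sqrt{\beta}+s)=F(\varrho_{Y_{\beta,0}}(\sqrt{\beta}+s))$. Expanding $F$ about its minimum as $F(\sqrt{\beta}\pm s)=F(\sqrt{\beta})+\tfrac{a_{\pm}}{2}s^{2}+o(s^{2})$ for $s\to0^{+}$, where $a_{\pm}>0$ are the one-sided values of $F''$ at $\sqrt{\beta}$, one gets $\varrho_{Y_{\beta,0}}(\sqrt{\beta}+s)=\sqrt{\beta}-\sqrt{a_{+}/a_{-}}\,s+o(s)$, and hence $(\psi_{\lambda}^{0})'(\sqrt{\beta})=\sqrt{a_{+}/a_{-}}$. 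Therefore (ii) reduces to $a_{+}\neq a_{-}$, i.e. to $F$ failing to be twice differentiable at $\sqrt{\beta}$; since $F''=2x+\partial_{x}^{2}B$ on each side, this is the inequality $\partial_{x}^{2}B_{2}(\sqrt{\beta},\beta)\neq\partial_{x}^{2}B_{1}(\sqrt{\beta},\beta)$, a relation in $\beta$ that a short computation confirms for every $\beta\in(0,\beta_{0})$ (and in fact $a_{+}<a_{-}$, compatibly with (i)). Note that for the unperturbed field ($B\equiv0$) one has $a_{+}=a_{-}$ and $(\psi_{\lambda}^{0})'(\sqrt{\beta})=1$: this is the resonant case, and the bump is designed exactly to break it.

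The only genuine work, and the step I expect to be the main hurdle, is the explicit polynomial bookkeeping cited above: verifying the monotonicity and boundary values of $F$, the inequality $F(\sqrt{\beta}+s)<F(\sqrt{\beta}-s)$ on $(0,2\sqrt{\beta})$, and $\partial_{x}^{2}B_{2}(\sqrt{\beta},\beta)\neq\partial_{x}^{2}B_{1}(\sqrt{\beta},\beta)$. These are elementary but lengthy — which is precisely why $B$ was prescribed with such particular coefficients — whereas conceptually nothing is left once the two first integrals $H_{X}$ and $H_{Y}$ are in hand.
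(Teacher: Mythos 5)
Your geometric setup is sound and, for item (i), it parallels the paper's own argument: $\varrho_{X_{\lambda}}$ is the reflection $\widetilde{x}\mapsto 2\sqrt{\beta}-\widetilde{x}$, $\varrho_{Y_{\beta,0}}$ is obtained by matching values of the first integral $F$, and (i) reduces to $F(\sqrt{\beta}+s)<F(\sqrt{\beta}-s)$ for $s\in(0,2\sqrt{\beta})$. This is exactly what the paper checks, except that the paper actually performs the computation: it evaluates the height difference $q_2-\widetilde{q}_2$ of the two relevant orbit--arcs on a transversal and exhibits it in the factored form $\frac{(p_1-3\sqrt{\beta})(p_1-\sqrt{\beta})^3\,(p_1(1744+99\sqrt{\beta})-\sqrt{\beta}(6256+321\beta))}{384\beta}$, whose sign for $\sqrt{\beta}<p_1<3\sqrt{\beta}$ gives (i). The genuine gap in your proposal is that you stop precisely at this point and assert the inequality ``will hold because the coefficients of $B_1,B_2$ have been tuned for it.'' That cannot be waved through: with $B\equiv 0$ one gets $F(\sqrt{\beta}+s)-F(\sqrt{\beta}-s)=\tfrac{2}{3}s^{3}>0$, i.e.\ the \emph{opposite} inequality ($\psi_{\lambda}^{0}(x)>x$), so the sign asserted in (i) is created entirely by the specific bump, and the explicit verification you defer (together with the monotonicity and boundary values of $F$ you also defer) \emph{is} the proof, not bookkeeping appended to it.

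Concerning (ii), your mechanism $(\psi_{\lambda}^{0})'(\sqrt{\beta})=\sqrt{a_{+}/a_{-}}$, with $a_{\pm}$ the one--sided second derivatives of $F$ at its minimum, is correct and is actually more directly aimed at the derivative at the two--fold than the paper's justification, which simply invokes Subsection \ref{secao primeiro retorno em Gamma} (the square--root transition at the visible fold $a$, i.e.\ the behavior of $\psi_{\lambda}^{0}$ near the other endpoint $3\sqrt{\beta}$). But two things are still missing on your side: first, the decisive inequality $\partial^2_x B_1(\sqrt{\beta},\beta)\neq\partial^2_x B_2(\sqrt{\beta},\beta)$ is again the whole content and is never computed; second, your own formula shows that (ii) can only hold if $B$ fails to be $C^2$ at $x=\sqrt{\beta}$ --- if $B$ were the $C^2$ function announced when \eqref{eq fold-cusp 3 parametros inicio} is introduced, $\varrho_{Y_{\beta,0}}$ would be a smooth involution with derivative $-1$ at the fold and you would get $(\psi_{\lambda}^{0})'(\sqrt{\beta})=1$ --- so you must state explicitly that you are using the $C^1$-with-corner description of $B$ and verify the corner from the formulas for $B_1,B_2$. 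As it stands, the proposal is a correct reduction plus an unexecuted computation, and the computation is where the lemma lives.
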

\begin{proof} Consider Figure \ref{fig primeiro retorno}. Given a
point $p \in L_2$, the positive $Y-$orbit by $p$ reaches $L_3$ at
the point $q=(q_1,q_2)$ and the negative $X-$orbit by $p$ reaches
$L_0$ at the point
$\widetilde{p}=(\widetilde{p}_1,\widetilde{p}_2)$. The negative
$Y-$orbit by $\widetilde{p}$ reaches $L_3$ at the point
$\widetilde{q}=(\widetilde{q}_1,\widetilde{q}_2)$. Since $$q_2 -
\widetilde{q}_2 = \displaystyle\frac{ (p_1 - 3 \sqrt{\beta}) (p_1 -
 \sqrt{\beta})^3 (p_1 (1744 + 99 \sqrt{\beta}) -
 \sqrt{\beta} (6256 + 321 \beta))}{384 \beta}$$and $\sqrt{\beta}<p_1<3 \sqrt{\beta}$
 we conclude that $q_2 -
\widetilde{q}_2 > 0$ and item (i) is proved. Item (ii) follows from
Section \ref{secao primeiro retorno em Gamma}.
\end{proof}

 Note that Lemma \ref{lema hipotese H} implies that
$Z_{\sqrt{\beta},\beta,0}$ does not have closed orbits  in the
interior of the closed curve of $Z$ passing through the visible
$\Sigma-$fold point of $Y_{0,\beta}$. Moreover, when $\mu < 0$ (see
Figure \ref{fig dinamica simbolica}), Lemma \ref{lema hipotese H}
guarantees that $\psi_{\lambda}^{\mu}$ has a unique fixed point
$\overline{x}$ where $\overline{x} < 3 \sqrt(\beta) + \mu$. And, in
this case, $|(\psi_{\lambda}^{\mu})'(\overline{x}))| \neq 1$, i.e.,
$\overline{x}$ is a hyperbolic fixed point for
$\psi_{\lambda}^{\mu}$ that corresponds to a hyperbolic canard cycle
of $Z_{\lambda,\beta,\mu}$. When $\mu > 0$ (see Figure \ref{fig
dinamica simbolica}), $\psi_{\lambda}^{\mu}(x) < x$ for all $x \in
(\sqrt{\beta}, 3 \sqrt{\beta} + \mu)$ and closed orbits of
 $Z_{\lambda,\beta,\mu}$ do not arise.\\

\begin{figure}[!h]
\begin{center} \psfrag{A}{$\sqrt{\beta}$} \psfrag{B}{$3 \sqrt{\beta}$}
\psfrag{C}{$3 \sqrt{\beta} + \mu$} \psfrag{D}{$3 \sqrt{\beta}$}
\psfrag{E}{$\mu<0$} \psfrag{F}{$\mu=0$} \psfrag{G}{$\mu>0$}
\psfrag{X}{$\overline{x}$}
 \epsfxsize=11cm \epsfbox{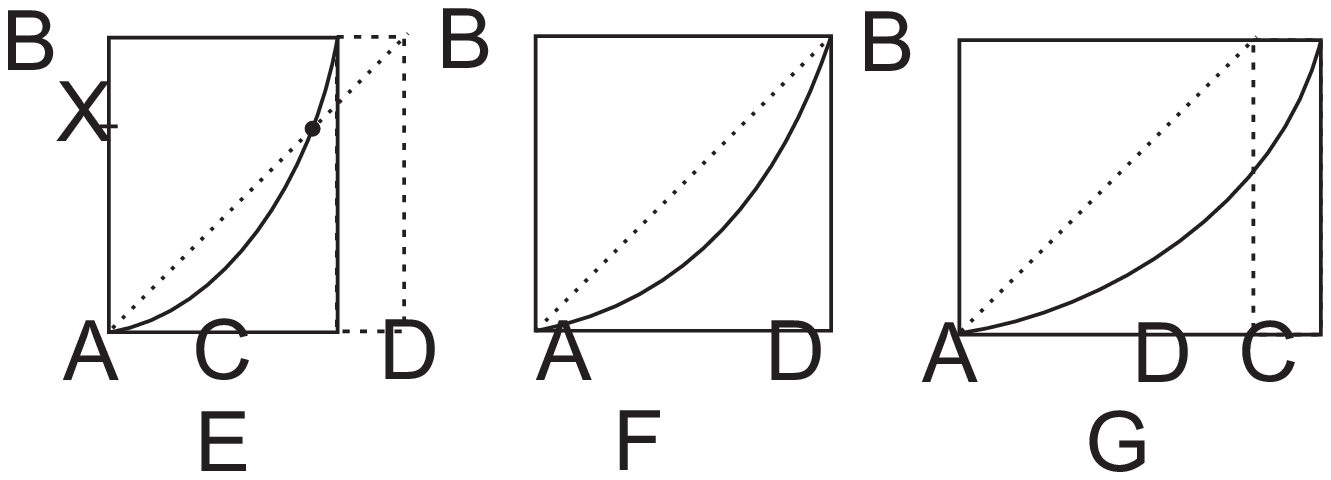}
\caption{\small{Graph of the First Return Map
$\psi_{\lambda}^{\mu}$.}} \label{fig dinamica simbolica}
\end{center}
\end{figure}


Given $Z=(X,Y)$, we describe some properties of both $X =
X_{\lambda}$ and either $Y = Y_{\beta,\mu}$ or $Y = Y_{\beta}$.

The parameter $\lambda$ measures how the $\Sigma-$fold point
$d=(\lambda,0)$ of $X$ is translated away from the origin. More
specifically, if $\lambda < 0$ then $d$ is translated to the left
hand side and if $\lambda
> 0$ then $d$ is
translated to the right hand side.

The parameter $\beta$ distinguishes the contact order between a
trajectory of $Y$ and $\Sigma$. In this way, it occurs one, and only
one, of the following situations:

\begin{itemize}
\item $\mathbf{Y^{+}}$: In this case $\beta > 0$. So $Y$ has two
$\Sigma-$fold points in such a way that one of them invisible and
the other one visible. These points are expressed by $a = a_{\beta}
= (-\sqrt{\beta},0)$ and $b = b_{\beta} = (\sqrt{\beta},0)$.
Moreover, a third point $c=c_{\beta,\mu}=(3 \sqrt{\beta} + \mu,0)$
plays an important role at the analysis of \eqref{eq fold-cusp 3
parametros inicio}. This point is the locus where the orbit$-$arc
$\gamma_{Y}(a)$ intersects transversally $\Sigma$ for negative time
(see Figure \ref{fig duas dobras}). Using the bump function $B$ the
distance between $c$ and $b$ is bigger or smaller than the distance
between $a$ and $b$ according to the value of the parameter $\mu$.
This fact will be important to change from Theorem 1 to Theorems 2
and 3.

\item $\mathbf{Y^{0}}$: In this case $\beta = 0$. So  $Y$ has a  $\Sigma-$cusp point $e = (0,0)$ (see
Figure \ref{fig fold cusps}).

\item $\mathbf{Y^{-}}$: In this case $\beta < 0$. So $Y$ does not have
$\Sigma-$fold points.
In this way, $Y.f \neq 0$ and $Y$ is transversal to $\Sigma$ (see
Figure \ref{fig cusp transversal}).
\end{itemize}

\begin{figure}[!h]
\begin{minipage}[b]{0.493\linewidth}
\begin{center}\psfrag{A}{$b$} \psfrag{B}{$a$} \psfrag{C}{$c$}
\psfrag{D}{\hspace{-.7cm}$\gamma_{Y}(a)$} \psfrag{E}{$X.f=0$}
\psfrag{F}{$\gamma_Y(a)$} \epsfxsize=3cm
\epsfbox{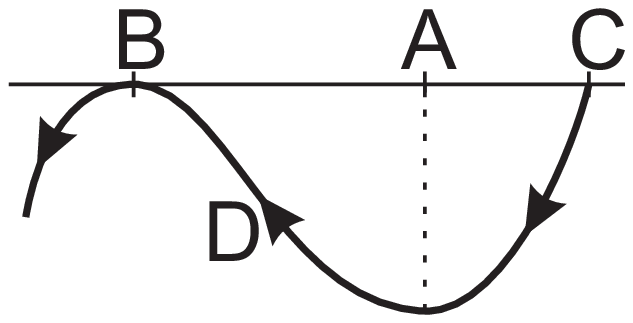} \caption{\small{Case $Y^{+}$.}}
\label{fig duas dobras}\end{center}
\end{minipage} \hfill
\begin{minipage}[b]{0.493\linewidth}
\begin{center}\psfrag{A}{$0$} \psfrag{B}{$X$} \psfrag{D}{$d$}
\epsfxsize=3cm \epsfbox{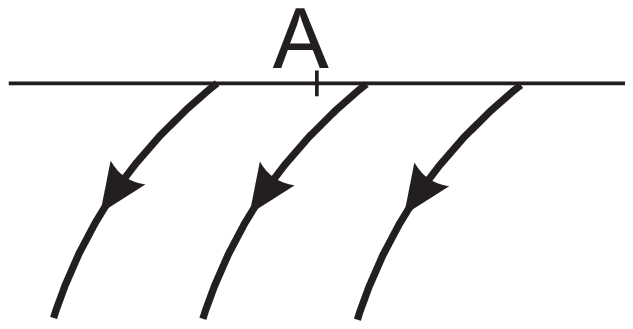}
\caption{\small{Case $Y^{-}$.}} \label{fig cusp transversal}
\end{center}\end{minipage}
\end{figure}

\subsection{The Direction Function}\label{secao funcao direcao}

The next function will be very useful in the sequel.\\

On $\Sigma$, consider  the point $C=(C_{1},C_2)$, the vectors
$X(C)=(D_1,D_2)$ and $Y(C)=(E_1,E_2)$ (as illustrated in Figure
\ref{fig funcao direcao}). Observe that the straight line $r(C)$ by
$q + X(q)$ and $q + Y(q)$, generically, meets $\Sigma$ in a point
$p(C)$. We define the C$^r-$map
$$
\begin{array}{cccc}
  p: & \Sigma & \longrightarrow & \Sigma \\
     & z & \longmapsto & p(z).
\end{array}
$$

We choose local coordinates such that $\Sigma$ is the $x-$axis; so
$C=(C_1,0)$ and $p(C) \in \R \times \{ 0 \}$ can be identified with
points in $\R$. According with this identification, the
\textit{direction function} on $\Sigma$ is defined by
$$
\begin{array}{cccc}
  H: & \R & \longrightarrow & \R \\
     & z & \longmapsto & p(z) - z.
\end{array}
$$
\begin{figure}[!h]
\begin{center}
\psfrag{A}{$A$} \psfrag{B}{$B$} \psfrag{C}{$C$} \psfrag{D}{$\Sigma$}
\psfrag{E}{$X$} \psfrag{F}{$Y$} \psfrag{G}{$C+Y(C)$} \psfrag{H}{$C+
X(C)$} \psfrag{I}{$p(C)$}\psfrag{R}{\hspace{-.5cm}$r(C)$}
\epsfxsize=5cm \epsfbox{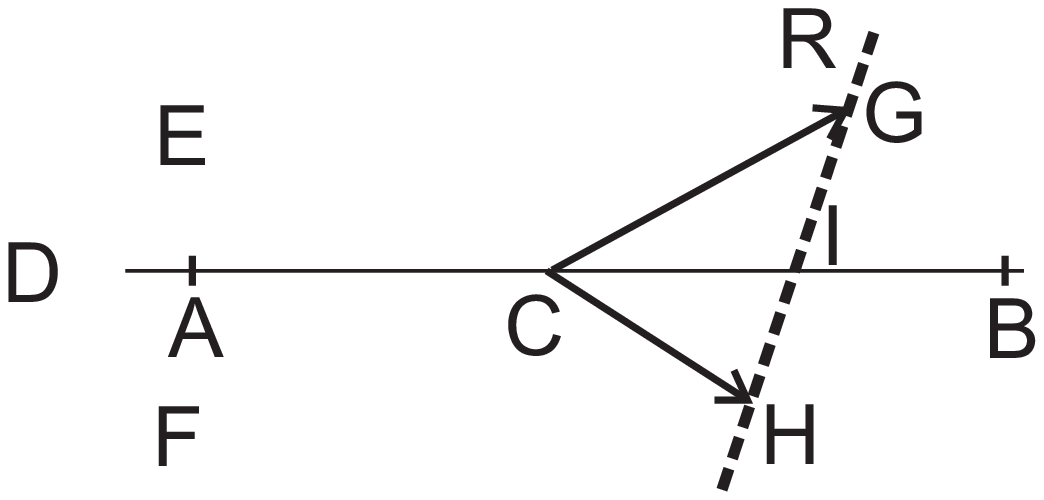}
\caption{\small{Direction function.}} \label{fig funcao direcao}
\end{center}
\end{figure}

\begin{remark}\label{obs sobre zeros H}We obtain that $H$ is a
C$^r-$map. When $C \in \Sigma^e \cup \Sigma^s$ the following holds:

\begin{itemize}
\item if $H(C) < 0$ then the orientation of $Z^{\Sigma}$ in a small neighborhood of $C$ is from $B$ to $A$;

\item if $H(C) = 0$  then  $C \in \Sigma^p$;

\item if $H(C) > 0$ then the orientation of $Z^{\Sigma}$ in a small neighborhood of $C$ is from $A$ to $B$.
\end{itemize}\end{remark}

Simple calculations show that $p(C_1) = \frac{E_2 (D_1+C_1)  - D_2
(E_1+C_1)}{E_2 - D_2}$ and consequently,
\begin{equation}\label{eq H}
H(C_1) = \frac{E_2 D_1  - D_2 E_1}{E_2 - D_2}.
\end{equation}

\begin{remark}\label{obs H positiva ou negativa nas dobras}
If $X.f(p)=0$ and $Y.f(p) \neq 0$ then, in a neighborhood $V_p$ of
$p$ in $\Sigma$, holds $H(V_p) D_1>0$, 
 where $X(p)=(D_1,D_2)$. In fact,
$X.f(p)=0$ and $Y.f(p) \neq 0$ are equivalent to say that $D_2=0$
and $E_2 \neq 0$ in \eqref{eq H}. So,
$\displaystyle\lim_{(D_2,E_2)\rightarrow(0,k_0)} H(p_1) = D_1$,
where $k_0 \neq 0$ and $p=(p_1,p_2)$.
\end{remark}

Considering the previous notation and identifying $\Sigma$ with the
$x-$axis, we have that $r(C) \cap \Sigma = \emptyset$ when
$E_2=D_2$. In such a case, $H$ is not defined at $C$. The following
property is immediate.

\begin{proposition}\label{prop quantidade de equilibria}
If $n_1$ is the number of pseudo equilibria and $n_2$ is the number
of virtual pseudo equilibria then $n_1 + n_2 = v_1 + v_2$ where
$v_1$ is the number of zeros of $H$ and $v_2$ is the number of
points $q$ of $\Sigma$ such that $r(q) \cap \Sigma = \emptyset$.
\end{proposition}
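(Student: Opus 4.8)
The plan is to turn the identity into pure bookkeeping, by showing that a point of $\Sigma$ is a pseudo equilibrium or a virtual pseudo equilibrium exactly when it is a zero of $H$ or a point at which $H$ is undefined, and that the latter points are precisely the $v_2$ points with $r(q)\cap\Sigma=\emptyset$. First I would fix the local coordinates of Section \ref{secao preliminares}, so that $\Sigma$ is the $x$-axis and $f(x,y)=y$; for $C=(C_1,0)\in\Sigma$ write $X(C)=(D_1,D_2)$ and $Y(C)=(E_1,E_2)$, so that $X.f(C)=D_2$, $Y.f(C)=E_2$, and formula \eqref{eq H} is available whenever $E_2\neq D_2$.

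Then I would go through the regions of $\Sigma$. On $\Sigma^e\cup\Sigma^s$ one has $D_2E_2<0$, hence $E_2\neq D_2$ and $H$ is defined; the point $m$ of Filippov's convention is the intersection of $\Sigma$ with the line $r(C)$, so under the identification of $\Sigma$ with $\R$ we get $Z^\Sigma(C)=p(C_1)-C_1=H(C_1)$, and therefore $C$ is a pseudo equilibrium if and only if $H(C_1)=0$. On $\Sigma^c$ every point is $\Sigma$-regular; if $E_2\neq D_2$ then $r(C)$ meets $\Sigma$ and, by \eqref{eq H}, $H(C_1)=0$ is equivalent to $D_1E_2-D_2E_1=0$, i.e. to $\{X(C),Y(C)\}$ being linearly dependent, so $C$ is a virtual pseudo equilibrium iff $H(C_1)=0$; if instead $E_2=D_2$, then since $D_2=E_2\neq0$ on $\Sigma^c$ the line $r(C)$ is horizontal at height $D_2\neq0$, hence $r(C)\cap\Sigma=\emptyset$, $H$ is undefined at $C$, and $C$ is a virtual pseudo equilibrium by the second clause of Definition \ref{definicao virtual pseudo equilibrio}. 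Conversely, $r(q)\cap\Sigma=\emptyset$ forces $D_2=E_2\neq0$, hence $q\in\Sigma^c$, so all the $v_2$ points counted by $v_2$ lie in $\Sigma^c$. Consequently the pseudo equilibria are exactly the zeros of $H$ lying in $\Sigma^e\cup\Sigma^s$, while the virtual pseudo equilibria are exactly the zeros of $H$ lying in $\Sigma^c$ together with the $v_2$ points where $H$ is undefined; these two kinds of virtual pseudo equilibria are disjoint (a zero of $H$ can never be a point at which $H$ is undefined), so $n_2$ equals the number of zeros of $H$ in $\Sigma^c$ plus $v_2$.

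To finish, I would rule out zeros of $H$ at the remaining strata, namely the tangential singularities on $\partial\Sigma^c\cup\partial\Sigma^e\cup\partial\Sigma^s$: if $D_2=0$ and $E_2\neq0$ then \eqref{eq H} gives $H(C_1)=D_1$, which vanishes only when $X(C)=0$, an equilibrium of $X$ on $\Sigma$ excluded by the standing hypotheses (the fold and cusp points considered in the paper are never singular points of $X$ or $Y$), and the case $E_2=0$, $D_2\neq0$ is symmetric with $H(C_1)=E_1$. Hence $v_1$, the total number of zeros of $H$, equals the number of zeros in $\Sigma^e\cup\Sigma^s$ plus the number in $\Sigma^c$, i.e. $n_1+(n_2-v_2)$, which rearranges to $n_1+n_2=v_1+v_2$. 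The only delicate point is this last step — making sure the dictionary ``singularity $\leftrightarrow$ zero or pole of $H$'' is exhaustive and that nothing is double counted or lost at the boundary strata; all of that is handled by the non-degeneracy already in force in the paper.
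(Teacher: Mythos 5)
Your argument is correct and follows essentially the same route as the paper: the published proof is a one-line appeal to Remark \ref{obs sobre zeros H}, Equation \eqref{eq H} and Definition \ref{definicao virtual pseudo equilibrio}, and your case-by-case bookkeeping (pseudo equilibria $=$ zeros of $H$ on $\Sigma^e\cup\Sigma^s$, virtual pseudo equilibria $=$ zeros of $H$ on $\Sigma^c$ plus the points with $r(q)\cap\Sigma=\emptyset$, no zeros at tangency points) is exactly the computation the authors declare ``straightforward.'' Nothing in your write-up deviates from or adds to that approach beyond filling in the omitted details, which you do correctly.
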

\begin{proof} Straightforward according to Remark \ref{obs sobre zeros H},
Equation \eqref{eq H} and Definition \ref{definicao virtual pseudo
equilibrio}.\\ \end{proof}

\begin{remark}\label{obs parabola na h} Given
$Z_{\lambda,\beta,\mu}$, we list some properties of the function
$H$. According to \eqref{eq H} we have that the expression of $H$ is
\[
H(x, \lambda, \beta ,\mu) = \frac{H_1 (x, \lambda, \beta ,\mu)}{H_2
(x, \lambda, \beta ,\mu)}
\]where $H_1 (x, \lambda, \beta ,\mu) = -x^2 -x + \lambda + \beta - \dfrac{\partial B}{\partial
x}(x,\beta,\mu)$ and $H_2 (x, \lambda, \beta ,\mu) = -x^2 +x -
\lambda + \beta - \dfrac{\partial B}{\partial x}(x,\beta,\mu)$. So,

\begin{itemize}

\item[(i)] When $x=\lambda$ we get $H_1 (\lambda, \lambda, \beta ,\mu) = H_2 (\lambda, \lambda, \beta
,\mu)$.

\item[(ii)] For the parameter values satisfying $\beta= \lambda^2 +
\dfrac{\partial B}{\partial x}(\lambda,\beta,\mu) > 0$ we have $H_1
(\lambda, \lambda, \beta ,\mu)=H_2 (\lambda, \lambda, \beta
,\mu)=0$.

\item[(iii)] Since $H_1(0,0,0,0)=0$ (respectively
$H_2(0,0,0,0)=0$) and \linebreak $\dfrac{\partial H_1}{\partial
x}(0,0,0,0)=-1$ (respectively $\dfrac{\partial H_2}{\partial
x}(0,0,0,0)=1$), by the Implicit Function Theorem there is a unique
$x=x_{H_{1}}(\lambda, \beta ,\mu)$ such that $H_1
(x_{H_{1}}(\lambda, \beta ,\mu), \lambda, \beta ,\mu) = 0$
(respectively $H_2 (x_{H_{2}}(\lambda, \beta ,\mu), \lambda, \beta
,\mu) = 0$). Therefore, there is only one zero of $H_1$ and only one
zero of $H_2$ in a sufficiently small neighborhood of $x=0$. These
points are called $p_1$ and $r_1$, respectively, in Figure \ref{fig
diag bif H}. The pseudo equilibrium $p_1$ and the virtual pseudo
equilibrium $r_1$ are the unique roots of $H_1$ and $H_2$,
respectively, that are relevant to our analysis. In fact, the other
roots are far from the origin.

\end{itemize}
\end{remark}

\begin{figure}[!h]
\begin{center}\psfrag{A}{$r_1$} \psfrag{B}{$r_2$}
\psfrag{C}{$p_1$}\psfrag{D}{$p_2$}\psfrag{E}{$1$}\psfrag{1}{H-1}
\psfrag{2}{H-2} \psfrag{3}{H-3} \psfrag{5}{$\beta$}
\psfrag{4}{$\lambda$} \psfrag{6}{\hspace{-1.4cm}$\beta= \lambda^2 +
\dfrac{\partial B}{\partial x}(\lambda,\beta,\mu)$} \epsfxsize=8cm
\epsfbox{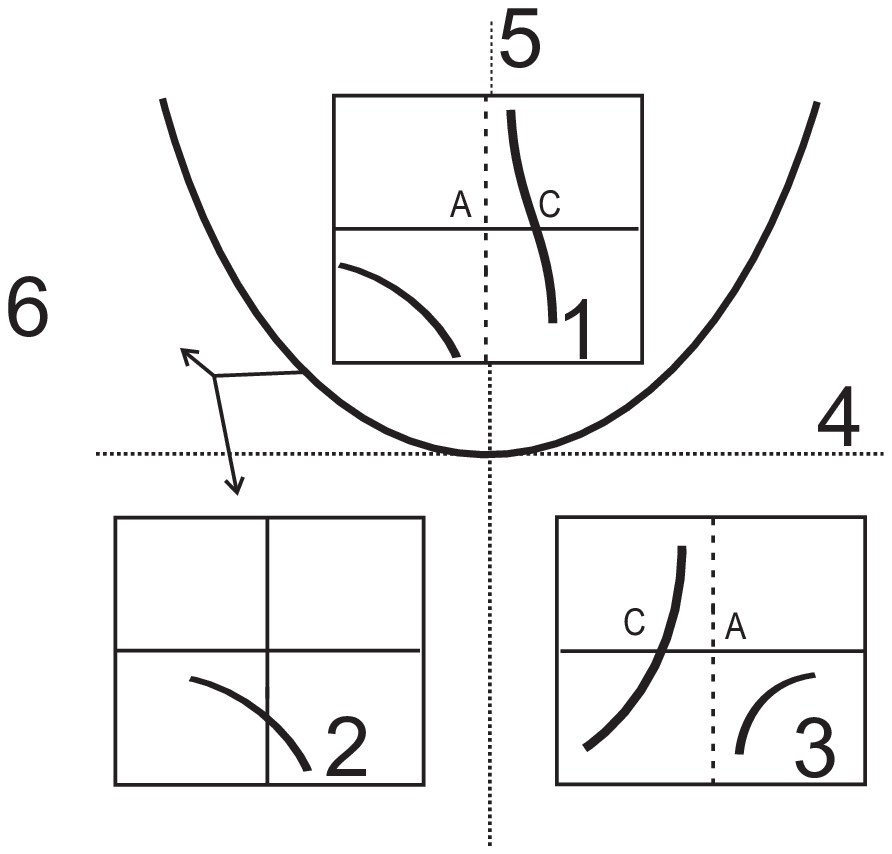} \caption{\small{Variation of $H$
with respect to $\lambda$ and $\beta$. The dark lines in the boxes
H-1, H-2 and H-3 correspond to the graph of $H$.}} \label{fig diag
bif H}
\end{center}
\end{figure}

\subsection{Proof of Lemma \ref{lema equivalencia}}\label{secao prova lema
equivalencia} Now we prove Lemma \ref{lema equivalencia}.
\begin{proof}[Proof of Lemma \ref{lema equivalencia}]
Here we construct a $\Sigma-$preserving homeomorphism $h$ that sends
orbits of $Z=(X,Y)$ to orbits of
$\widetilde{Z}=(\widetilde{X},\widetilde{Y})$, where
$\widetilde{Z}=Z_{\rho}$ is given by \eqref{eq fold-cusp geral} with
$\rho_1=1$ and $\rho_i=-1$, $i=2,3,4$. The other choices on
parameters $\rho_i$, $i=1,2,3,4$, are treated in a similar way. Let
$p$ (respectively, $\widetilde{p}$) be the fold$-$cusp singularity
of $Z$ (respectively, $\widetilde{Z}$) (see Figure \ref{fig lema
equivalencia}). \begin{figure}[!h]
\begin{center}
\psfrag{A}{$\Sigma$} \psfrag{B}{$q_{s}^{1}$} \psfrag{C}{$q$}
\psfrag{D}{$q_{s}^{2}$} \psfrag{E}{$T$} \psfrag{F}{$p_{s}^{1}$}
\psfrag{G}{$\gamma$} \psfrag{H}{$h$} \psfrag{I}{$p_{s}^{2}$}
\psfrag{J}{$p$} \psfrag{K}{$\widetilde{q}_{s}^{1}$}
\psfrag{L}{$\widetilde{\gamma}$} \psfrag{M}{$\widetilde{q}$}
\psfrag{N}{$\widetilde{q}_{s}^{2}$} \psfrag{O}{$\widetilde{T}$}
\psfrag{P}{$\widetilde{p}$} \psfrag{Q}{$\widetilde{p}_{s}^{1}$}
\psfrag{R}{$\widetilde{p}_{s}^{2}$} \psfrag{S}{$\widetilde{\Sigma}$}
\psfrag{U}{$\widetilde{Y}$}
\psfrag{V}{$\widetilde{X}$}\psfrag{X}{$X$} \psfrag{Y}{$Y$}
\epsfxsize=12cm \epsfbox{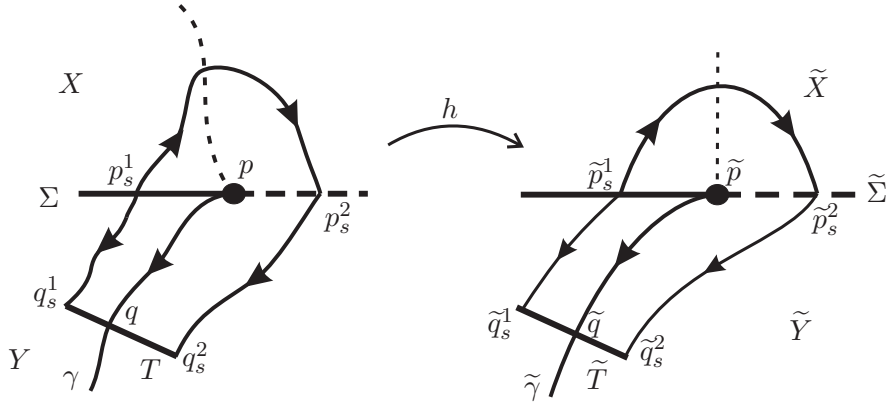}
\caption{\small{Construction of the homeomorphism.}} \label{fig lema
equivalencia}
\end{center}
\end{figure}
Identify $p$ with $\widetilde{p}$, i.e., $h(p)=\widetilde{p}$.
Consider a point $q\in \gamma$ (respectively, $\widetilde{q}\in
\widetilde{\gamma}$), where $\gamma$ (respectively,
$\widetilde{\gamma}$) is the orbit$-$arc of $Y$ (respectively,
$\widetilde{Y}$) starting at $p$ (respectively, $\widetilde{p}$).
Identify $\gamma$ with $\widetilde{\gamma}$ (i.e.,
$h(\gamma)=\widetilde{\gamma}$) from a reparametrization by
arc$-$length. Let $T$ (respectively, $\widetilde{T}$) be transversal
sections to $Y$ (respectively, $\widetilde{Y}$)  passing through $q$
(resp., $\widetilde{q}$) with small amplitude. Identify $T$ with
$\widetilde{T}$ (i.e., $h(T)=\widetilde{T}$) by arc$-$length. Let
$q_{s}^{1} \in T$ be a point on the left of $q$. Using the Implicit
Function Theorem (abbreviated by IFT), there exists a time
$t_{s}^{1} < 0$, depending on $q_{s}^{1}$, such that
$\phi_{Y}(q_{s}^{1},t_{s}^{1}):= p_{s}^{1} \in \Sigma$. Since
$h(T)=\widetilde{T}$, there exists $\widetilde{q}_{s}^{1} \in
\widetilde{T}$ such that $h(q_{s}^{1}) = \widetilde{q}_{s}^{1}$.
Using IFT, there exists a time $\widetilde{t}_{s}^{1} < 0$,
depending on $\widetilde{q}_{s}^{1}$, such that
$\phi_{\widetilde{Y}}(\widetilde{q}_{s}^{1},\widetilde{t}_{s}^{1}):=
\widetilde{p}_{s}^{1} \in \widetilde{\Sigma}$. Identify the
orbit$-$arc $\sigma^{p_{s}^{1}}_{q_{s}^{1}}(Y)$ of $Y$ joining
$p_{s}^{1}$ to $q_{s}^{1}$ with the orbit$-$arc
$\widetilde{\sigma}^{\widetilde{p_{s}^{1}}}_{\widetilde{q_{s}^{1}}}(\widetilde{Y})$
of $\widetilde{Y}$ joining $\widetilde{p}_{s}^{1}$ to
$\widetilde{q}_{s}^{1}$ (i.e.,
$h(\sigma^{p_{s}^{1}}_{q_{s}^{1}}(Y))=\widetilde{\sigma}^{\widetilde{p_{s}^{1}}}_{\widetilde{q_{s}^{1}}}(\widetilde{Y})$)
by arc$-$length. Fix the notation for the orbit$-$arcs of a given
vector field joining two points. Since $p$ (respectively,
$\widetilde{p}$) is a $\Sigma-$fold point of $X$ (respectively,
$\widetilde{X}$), using the IFT, there exists a time $t_{s}^{2} > 0$
(respectively, $\widetilde{t}_{s}^{2} > 0$), depending on
$p_{s}^{1}$ (respectively, $\widetilde{p}_{s}^{1}$), such that
$\phi_{X}(p_{s}^{1},t_{s}^{2}):= p_{s}^{2} \in \Sigma$
(respectively,
$\phi_{\widetilde{X}}(\widetilde{p}_{s}^{1},\widetilde{t}_{s}^{2}):=
\widetilde{p}_{s}^{2} \in \widetilde{\Sigma}$). Identify
$\sigma^{p_{s}^{2}}_{p_{s}^{1}}(X)$ with
$\widetilde{\sigma}^{\widetilde{p_{s}^{2}}}_{\widetilde{p_{s}^{1}}}(\widetilde{X})$
(i.e.,
$h(\sigma^{p_{s}^{2}}_{p_{s}^{1}}(X))=\widetilde{\sigma}^{\widetilde{p_{s}^{2}}}_{\widetilde{p_{s}^{1}}}(\widetilde{X})$)
by arc$-$length. Using the IFT,  there exists a time $t_{s}^{3} > 0$
(respectively, $\widetilde{t}_{s}^{3} > 0$), depending on
$p_{s}^{2}$ (respectively, $\widetilde{p}_{s}^{2}$), such that
$\phi_{Y}(p_{s}^{2},t_{s}^{3}):= q_{s}^{2} \in T$ (resp.,
$\phi_{\widetilde{Y}}(\widetilde{p}_{s}^{2},\widetilde{t}_{s}^{3}):=
\widetilde{q}_{s}^{2} \in \widetilde{T}$). Identify
$\sigma^{q_{s}^{2}}_{p_{s}^{2}}(Y)$ with
$\widetilde{\sigma}^{\widetilde{q_{s}^{2}}}_{\widetilde{p_{s}^{2}}}(\widetilde{Y})$
(i.e.,
$h(\sigma^{q_{s}^{2}}_{p_{s}^{2}}(Y))=\widetilde{\sigma}^{\widetilde{q_{s}^{2}}}_{\widetilde{p_{s}^{2}}}(\widetilde{Y})$)
by arc$-$length.

So, the homeomorphism $h$ sends $\Sigma$ to $\widetilde{\Sigma}$ and
sends orbits of $Z$ to orbits of $\widetilde{Z}$. 

\end{proof}


\section{Proof of Theorem 1}\label{secao prova teorema 1}

\begin{proof}[Proof of Theorem 1] In Case $1_1$ we assume
that $Y$ presents the behavior $Y^{-}$ where $\beta<0$. In Cases
$2_1$, $3_1$ and $4_1$ we assume that $Y$ presents the behavior
$Y^0$ where $\beta=0$. In these cases canard cycles do not arise
(for a proof, see \cite{Eu-canard-cycles}).

$\diamond$ \textit{Case $1_1$. $\beta<0$:} The points of $\Sigma$ on
the left of $d$ belong to $\Sigma^e$ and the points on the right of
$d$ belong to $\Sigma^c$. See Figure \ref{fig 1 teo 1}. Since
$\beta<0$, the graph of $H$ is illustrated in H-3 of Figure \ref{fig
diag bif H}. We get that $p_1 = (-1+ \sqrt{1+ 4 \beta + 4
\lambda}/2,0) \in \Sigma^e$ is a $\Sigma-$repeller and  $r_1 = (1
-\sqrt{1+ 4 \beta - 4 \lambda}/2,0) \in \Sigma^c$.

\begin{figure}[!h]
\begin{center}\psfrag{A}{$p_1$} \psfrag{B}{$p_1$} \psfrag{C}{$r_1$}\psfrag{D}{$r_2$}
\epsfxsize=3.5cm \epsfbox{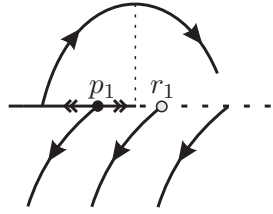}
\caption{\small{Case $1_1$.}} \label{fig 1 teo 1}
\end{center}
\end{figure}


$\diamond$ \textit{Case $2_1$. $\lambda<0$, Case $3_1$. $\lambda=0$
and Case $4_1$. $\lambda>0$:} The configuration of the connected
components of $\Sigma$ is the same as Case $1_1$. Since $\beta=0$,
the graph of $H$, when $\lambda\neq0$, is given by H-3 of Figure
\ref{fig diag bif H}. When $\lambda=0$ (Case $3_1$), the graph of
$H$ is given by H-2 of Figure \ref{fig diag bif H} and $p_1 = r_1$.
These cases are illustrated in Figure \ref{fig teo1 casos 2 3 4}.

\begin{figure}[!h]
\begin{center}\psfrag{A}{$p_1$} \psfrag{B}{$p_1$} \psfrag{C}{$r_1$}
\psfrag{D}{$r_2$}\psfrag{E}{$e$} \psfrag{F}{$d<e$} \psfrag{G}{$d=e$}
\psfrag{H}{$d>e$} \psfrag{I}{$2_1$} \psfrag{J}{$3_1$}
\psfrag{K}{$4_1$} \epsfxsize=12.2cm
\epsfbox{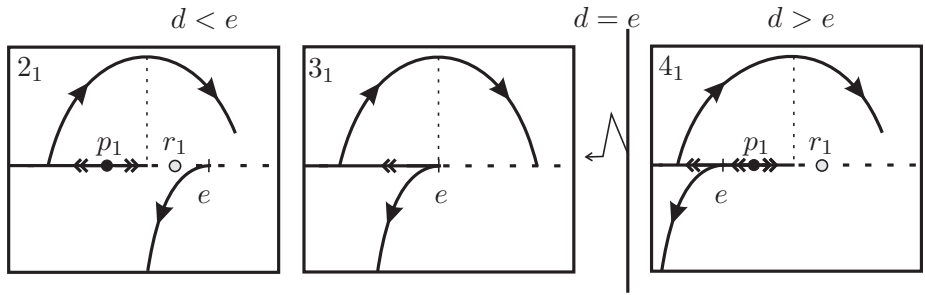} \caption{\small{Cases $2_1$,
$3_1$ and $4_1$.}} \label{fig teo1 casos 2 3 4}
\end{center}
\end{figure}

In Cases $5_1 - 17_1$ we assume that $Y$ presents the behavior $Y^+$
where $\beta>0$.

$\diamond$ \textit{Case $5_1$. $\lambda< - \sqrt{\beta}$:} The
points of $\Sigma$ on the left of $d$ belong to $\Sigma^e$, the
points inside the interval $(a,b)$ belong to $\Sigma^s$ and the
points on $(d,a)$ and on the right of $b$ belong to $\Sigma^c$. The
graph of $H$ is like H-3 of Figure \ref{fig diag bif H}. We can
prove that $p_1$ is a $\Sigma-$repeller  situated on the left
of $d$ and  $r_1 \in (d,a)$. 
canard cycles do not arise. See Figure \ref{fig teo 1 caso 5}.

\begin{figure}[!h]
\begin{minipage}[b]{0.493\linewidth}
\begin{center}\psfrag{A}{$a$} \psfrag{B}{$b$} \psfrag{C}{$c$}
\psfrag{D}{\hspace{-.2cm}$\gamma_1$} \psfrag{E}{$X.f=0$}
\psfrag{F}{$\gamma_1$} \epsfxsize=3.5cm
\epsfbox{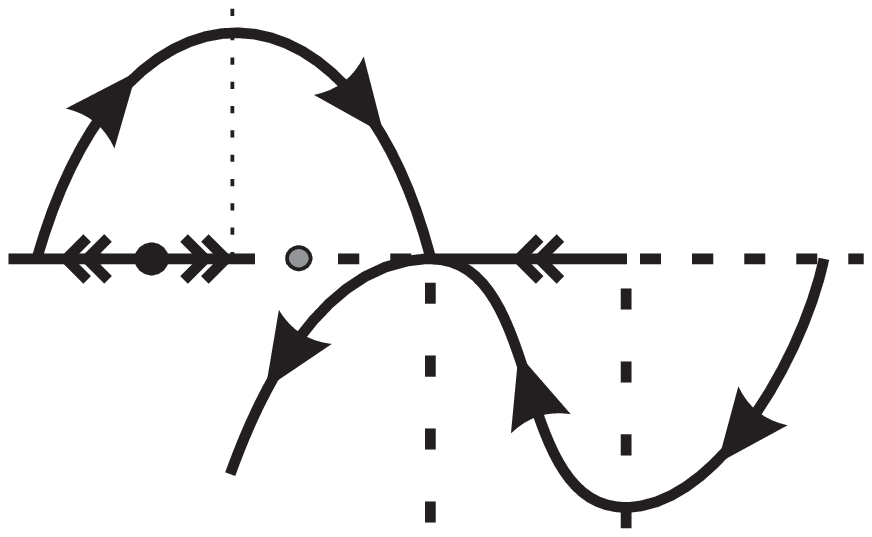} \caption{\small{Case $5_1$.}}
\label{fig teo 1 caso 5}\end{center}
\end{minipage} \hfill
\begin{minipage}[b]{0.493\linewidth}
\begin{center}\psfrag{A}{$0$} \psfrag{B}{$X$} \psfrag{D}{$d$}
\epsfxsize=3cm \epsfbox{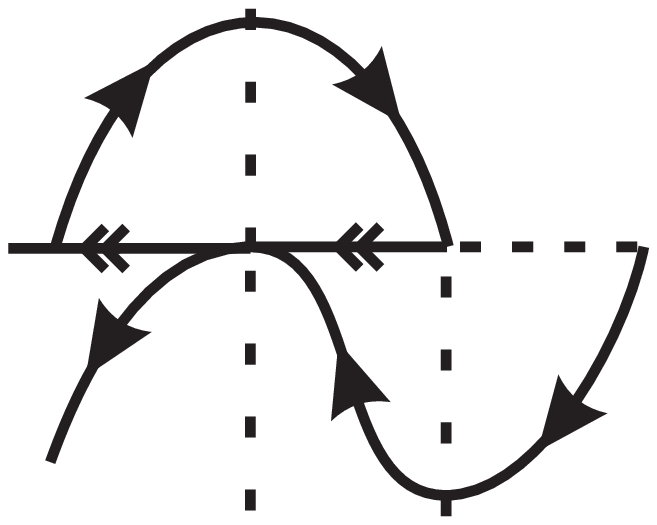}
\caption{\small{Case $6_1$.}} \label{fig teo 1 caso 6}
\end{center}\end{minipage}
\end{figure}

$\diamond$ \textit{Case $6_1$. $\lambda = - \sqrt{\beta}$:} In this
case the points on the right of $b$ belong to $\Sigma^c$, the points
on $(a=d,b)$ belong to $\Sigma^s$ and the points on the left of
$a=d$ belong to $\Sigma^e$. Since $\beta=\lambda^2$, $H$ is like H-2
of Figure \ref{fig diag bif H} and $p_1 = r_1$. 
There exists a non hyperbolic canard cycle $\Gamma$ of kind III
passing through $a$ and $c$. See Figure \ref{fig teo 1 caso 6}.

$\diamond$ \textit{Case $7_1$. $- \sqrt{\beta} < \lambda < 0$, Case
$8_1$. $\lambda = 0$ and Case $9_1$. $0 < \lambda < \sqrt{\beta} $:}
 The configuration of the connected components of $\Sigma$ is like Case $5_1$ replacing $a$ by $d$ and vice-versa. The
graph of $H$ is like H-1 of Figure \ref{fig diag bif H}. We observe
that $p_1 \in (d,b)$ is a $\Sigma-$attractor and $r_1 \in (a,d)$. %
There exists a hyperbolic repeller canard cycle
 $\Gamma$ of kind III passing through $a$ and $c$.
See Figure \ref{fig teo 1 casos 7 8 9}.

\begin{figure}[!h]
\begin{center}\psfrag{a}{$a$} \psfrag{b}{$b$} \psfrag{c}{$c$} \psfrag{d}{$d$}  \psfrag{b=d}{$b=d$} \psfrag{1}{$7_1$} \psfrag{2}{$8_1$} \psfrag{3}{$9_1$} \psfrag{4}{$b<d<0$}
\psfrag{5}{$d=0$}\psfrag{6}{$0<d<c$} \epsfxsize=12.2cm
\epsfbox{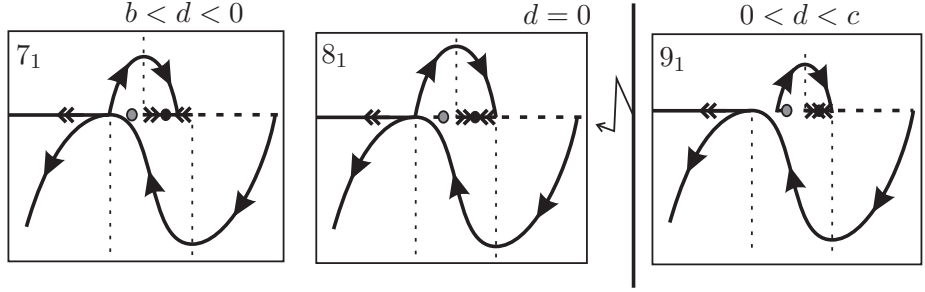} \caption{\small{Cases $7_1 -
9_1$.}} \label{fig teo 1 casos 7 8 9}
\end{center}
\end{figure}

$\diamond$ \textit{Case $10_1$. $\lambda = \sqrt{\beta}$:}  In this
case the points on the left of $a$ belong to $\Sigma^e$ and the
points  on the right of $a$ belong to $\Sigma^c$, except by $Q=(b,0)
\in \Sigma$. Since $\beta=\lambda^2$, $H$ is like H-2 of Figure
\ref{fig diag bif H} and $p_1 = r_1$. 
Since $\mu = 0$ and $d=b$, by the construction of the bump function
$B$ it is straightforward to show that the point $Q$ behaves itself
like a weak attractor for $Z$ and there exists a non hyperbolic
canard cycle of kind III passing through $a$ and $c$. See Figure
\ref{fig bump com o campo inicio}. This case has already been
discussed previously in Subsection \ref{secao global bifurcation}.
Note that in \cite{Marcel} the authors avoid this case.

$\diamond$ \textit{Case $11_1$. $\sqrt{\beta}<\lambda< L_1 $:} The
meaning of the value $L_1$ will be given below in this case. The
points of $\Sigma$ on the left of $a$ and on $(b,d)$ belong to
$\Sigma^e$. The points on $(a,b)$ and on the right of $d$ belong to
$\Sigma^c$. The graph of $H$ is like H-3 of Figure \ref{fig diag bif
H}. We can prove that $p_1 \in (b,d)$ is a $\Sigma-$repeller and
$r_1$ is on the right of $d$. Since the point $Q$ of the previous
case is a weak attractor, in a neighborhood of $d$ occurs a
\textit{Like Hopf Bifurcation}. Moreover, according to Lemma
\ref{lema hipotese H}, there is a unique canard cycle $\Gamma_1$ in
a neighborhood of $d$ and a unique canard cycle $\Gamma_2$ in a
neighborhood of $c$. Observe that both are of kind I, $\Gamma_1$ is
attractor, $\Gamma_2$ is repeller and $\Gamma_1$ is located within
the region bounded by $\Gamma_2$. See Figure \ref{fig nova 2
ciclos}. Note that, as $\lambda$ increases, $\Gamma_1$ becomes
bigger and $\Gamma_2$ becomes smaller. When $\lambda$ assumes the
limit value $L_1$, one of them collides with the other.

\begin{figure}[!h]
\begin{center}\psfrag{a}{$a$} \psfrag{b}{$b$} \psfrag{c}{$c$} \psfrag{d}{$d$}  \psfrag{b=d}{$b=d$}\psfrag{A}{$\lambda =L_1$} \psfrag{B}{$\sqrt{\beta}<\lambda < L_1$} \psfrag{C}{$\lambda = L_1$} \psfrag{D}{$ \lambda = -\frac{\beta}{2}$}
\psfrag{E}{$-\frac{\beta}{2} < \lambda < 0$}\psfrag{0}{$12_1$}
\psfrag{1}{$11_1$} \psfrag{2}{$12_1$}
\psfrag{3}{$10_1$}\psfrag{4}{$11_1$} \epsfxsize=9cm
\epsfbox{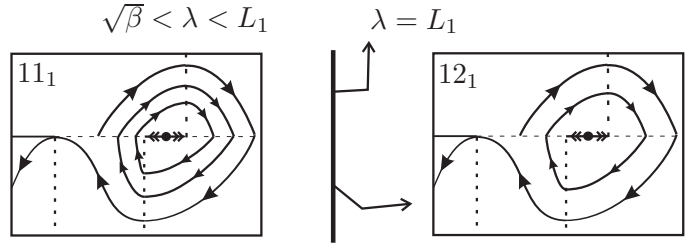} \caption{\small{Cases $11_1$ and
$12_1$.}} \label{fig nova 2 ciclos}
\end{center}
\end{figure}

$\diamond$ \textit{Case $12_1$. $\lambda = L_1$:} The distribution
of the connected components of $\Sigma$ and the behavior of $H$ are
the same as Case $11_1$. Since $\lambda = L_1$, as described in the
previous case, there exists a non hyperbolic canard cycle $\Gamma$
of kind I which is an attractor for the trajectories inside it and
is a repeller for the trajectories outside it. See Figure \ref{fig
nova 2 ciclos}.

$\diamond$ \textit{Case $13_1$. $L_1<\lambda< 2 \sqrt{\beta} $, Case
$14_1$. $\lambda = 2 \sqrt{\beta}$, Case $15_1$. $2 \sqrt{\beta} <
\lambda < 3 \sqrt{\beta}$, Case $16_1$. $\lambda = 3 \sqrt{\beta}$
and Case $17_1$. $\lambda
> 3 \sqrt{\beta}$:}  The distribution
of the connected components of $\Sigma$ and the behavior of $H$ are
the same as Case $11_1$. Canard cycles do not arise. See Figure
\ref{fig teo 1 casos 11 12 13 14 15}.

\begin{figure}[!h]
\begin{center}\psfrag{a}{$a$} \psfrag{b}{$b$} \psfrag{c}{$c$} \psfrag{d}{$d$}  \psfrag{b=d}{$b=d$}\psfrag{A}{\hspace{-.7cm}$0<\lambda < \beta / 2$} \psfrag{B}{$\lambda= \beta / 2$} \psfrag{C}{\hspace{.2cm}$\beta / 2 < \lambda
< \beta$} \psfrag{D}{$ \lambda = \beta$} \psfrag{E}{$\lambda >
\beta$}\psfrag{1}{$13_1$}  \psfrag{2}{$14_1$}
\psfrag{3}{$15_1$}\psfrag{4}{$16_1$} \psfrag{5}{$17_1$}
\epsfxsize=11.5cm \epsfbox{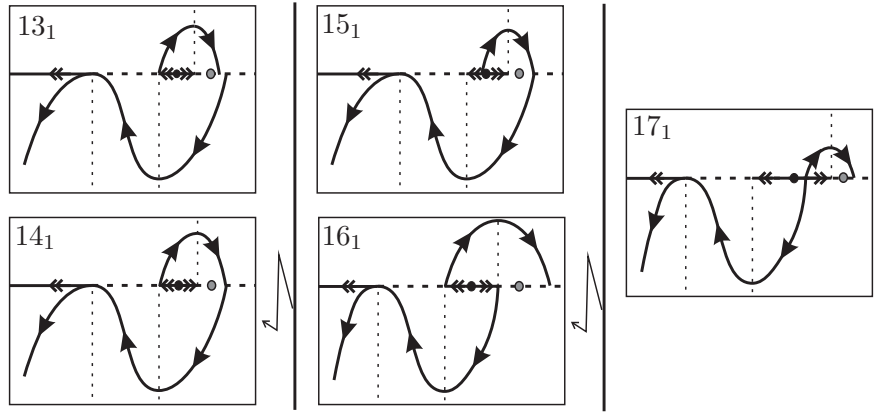}
\caption{\small{Cases $13_1 - 17_1$.}} \label{fig teo 1 casos 11 12
13 14 15}
\end{center}
\end{figure}

\begin{figure}[!h]
\begin{center}\psfrag{A}{$11_1$} \psfrag{B}{$12_1$} \psfrag{C}{$13_1$} \psfrag{D}{$14_1$} \psfrag{E}{$15_1$}
\psfrag{F}{$\lambda$}  \psfrag{G}{$\beta$}
\psfrag{H}{$\lambda=-\sqrt{\beta}$} \psfrag{I}{$\lambda=
2\sqrt{\beta}$} \psfrag{J}{$\lambda= 3\sqrt{\beta}$}
\psfrag{K}{$\lambda=\sqrt{\beta}$}
 \psfrag{1}{$1_1$} \psfrag{2}{$2_1$} \psfrag{3}{$3_1$}
  \psfrag{4}{$4_1$} \psfrag{5}{$5_1$} \psfrag{6}{$6_1$}
   \psfrag{7}{$7_1$} \psfrag{8}{$8_1$} \psfrag{9}{$9_1$}
    \psfrag{0}{$10_1$}\psfrag{Y}{$16_1$}\psfrag{Z}{$17_1$}\psfrag{W}{$L_1$}
\epsfxsize=8cm \epsfbox{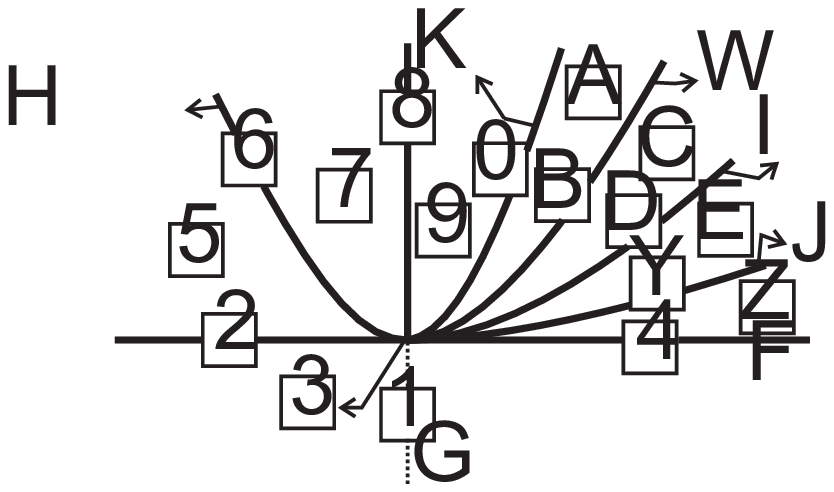}
\caption{\small{Bifurcation Diagram of Theorem 1.}} \label{fig diag
bif teo 1}
\end{center}
\end{figure}

%
The Bifurcation Diagram is illustrated in Figure \ref{fig diag bif
teo 1}.\end{proof}

\begin{remark}\label{observacao separatrizes} In Cases $9_1$ and $11_1$
the ST$-$bifurcations (as described in \cite{Marcel}, Subsections
11.2 and 12.2) arise. In fact, note that the trajectory passing
through $a$, in Case $9_1$, and $c$, in Case $11_1$, can make more
and more turns around $p_1$. This fact characterizes a global
bifurcation also reached in other cases.
\end{remark}


\section{Proof of Theorem 2}\label{secao prova teorema 2}

\begin{proof}[Proof of Theorem 2] In Case $1_2$ we assume
that $Y$ presents the behavior $Y^-$.  In Cases $2_2$, $3_2$ and
$4_2$ we assume that $Y$ presents the behavior $Y^0$. In Cases $5_2
- 19_2$ we assume that $Y$ presents the behavior $Y^+$.

$\diamond$ \textit{Case $1_2$. $\beta<0$, Case $2_2$. $\lambda<0$,
Case $3_2$. $\lambda=0$, Case $4_2$. $\lambda>0$. Case $5_2$.
$\lambda< - \sqrt{\beta}$. Case $6_2$. $\lambda = - \sqrt{\beta}$.
Case $7_2$. $- \sqrt{\beta} < \lambda < 0$ and Case $8_2$. $\lambda
= 0$: } By the choice of the bump function $B$, these cases are
analogous to Cases $1_1$, $2_1$, $3_1$, $4_1$, $5_1$, $6_1$, $7_1$
and $8_1$.


$\diamond$ \textit{Case $9_2$. $0 < \lambda < \sqrt{\beta} -
\mu/2$:} The analysis of this case is done in a similar way as the
Case $9_1$. In this case and in Cases $7_2$ and $8_2$ there exists a
hyperbolic repeller canard cycle $\Gamma$ of kind III passing
through $a$ and $c$.

$\diamond$ \textit{Case $10_2$. $\lambda = \sqrt{\beta} - \mu/2$:}
The points of $\Sigma$ on the left of $a$ belong to $\Sigma^e$ and
the points on $(d,b)$ belong to $\Sigma^s$. The points on $(a,d)$
and on the right of $b$ belong to $\Sigma^c$. The graph of $H$ is
like H-3 of Figure \ref{fig diag bif H}. Observe  that $p_1 \in
(d,b)$ is a $\Sigma-$attractor and  $r_1$ is on the right of $b$. In
this case the arc $\gamma_{X}(a)$ of $X$ passing through $a$ returns
to $\Sigma$ at the point $c$. So, in this case there arises a non
hyperbolic canard cycle $\Gamma = \gamma_{X}(a) \cup \gamma_{Y}(c)$.
By the discussion on subsection \ref{secao primeiro retorno em
Gamma},  we have that $\Gamma$ is a repeller and we do not have
other canard cycles inside $\Gamma$. See Figure \ref{fig teo 2 casos
10 11 12}.

\begin{figure}[!h]
\begin{center}\psfrag{a}{$a$} \psfrag{b}{$b$} \psfrag{c}{$c$} \psfrag{d}{$d$}  \psfrag{b=d}{$b=d$}\psfrag{A}{\hspace{-.7cm}$0<\lambda < \beta / 2$} \psfrag{B}{$\lambda= \beta / 2$} \psfrag{C}{\hspace{.2cm}$\beta / 2 < \lambda
< \beta$} \psfrag{D}{$ \lambda = \beta$} \psfrag{E}{$\lambda >
\beta$}\psfrag{1}{$10_2$}  \psfrag{2}{$11_2$}
\psfrag{3}{$12_2$}\psfrag{4}{$14_1$} \psfrag{5}{$15_1$}
\epsfxsize=12cm \epsfbox{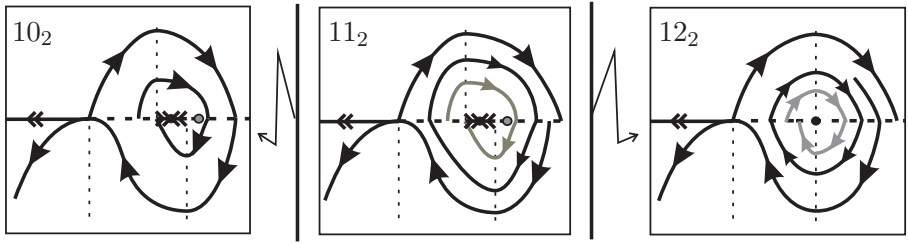}
\caption{\small{Cases $10_2 - 12_2$.}} \label{fig teo 2 casos 10 11
12}
\end{center}
\end{figure}

$\diamond$ \textit{Case $11_2$. $\sqrt{\beta} - \mu/2 <\lambda<
\sqrt{\beta}$:}  The configuration on $\Sigma$ and the graph of $H$
are the same as Case $10_2$. Since $\varrho_{X}^{-1}(c) \in (a,d)$
there exists a point $Q \in
(\varrho_{X}^{-1}(c),\varrho_{X}^{-1}(b))$ such that $\eta'(Q)=1$.
So there exists a hyperbolic repeller canard cycle $\Gamma$, of kind
I, passing through $Q$. See Figure \ref{fig teo 2 casos 10 11 12}.
Moreover, by Lemma \ref{lema hipotese H}  this canard cycle is
unique. In Figure \ref{fig dinamica simbolica} we introduce the
point $\overline{x}$ which plays the same role of $Q$.

$\diamond$ \textit{Case $12_2$. $\lambda = \sqrt{\beta}$:}  The
points of $\Sigma$ on the left of $a$ belong to $\Sigma^e$ and the
points on the right of $a$ belong to $\Sigma^c$, except by the
tangential singularity $c=d$. The graph of $H$ is like H-2 of Figure
\ref{fig diag bif H}. The repeller canard cycle $\Gamma$ presented
in the previous case is persistent. Recall that this canard cycle is
born from the bifurcation of Case $10_2$. So, the radius of $\Gamma$
does not tend to zero when $\lambda$ tends to $\sqrt{\beta}$.
Moreover, the tangential singularity $b=d$ behaves itself like a
weak attractor. See Figure \ref{fig teo 2 casos 10 11 12}.

$\diamond$ \textit{Case $13_2$. $\sqrt{\beta} <\lambda< L_1$,  Case
$14_2$. $\lambda = L_1$,  Case $15_2$. $L_1 < \lambda < 2
\sqrt{\beta} + \mu/2 $, Case $16_2$. $\lambda = 2 \sqrt{\beta} +
\mu/2$, Case $17_2$. $2 \sqrt{\beta} + \mu/2 < \lambda < 3
\sqrt{\beta} + \mu$, Case $18_2$. $\lambda = 3 \sqrt{\beta} + \mu$
and Case $19_2$. $\lambda
> 3 \sqrt{\beta} + \mu$:} The analysis of
these cases is done in a  similar way as Cases $11_1$, $12_1$,
$13_1$, $14_1$, $15_1$, $16_1$ and $17_1$, respectively.

\begin{figure}[!h]
\begin{center}\psfrag{A}{$13_2$} \psfrag{B}{$14_2$} \psfrag{C}{$15_2$} \psfrag{D}{$16_2$} \psfrag{E}{$17_2$}
\psfrag{F}{$\lambda$}  \psfrag{G}{$\beta$}
\psfrag{H}{$\lambda=-\sqrt{\beta}$} \psfrag{I}{$\lambda=
2\sqrt{\beta}$} \psfrag{J}{$\lambda= 3\sqrt{\beta}$}
\psfrag{K}{$\lambda=\sqrt{\beta}$}
\psfrag{L}{$10_2$}\psfrag{M}{$11_2$} 
 \psfrag{1}{$1_2$} \psfrag{2}{$2_2$} \psfrag{3}{$3_2$}
  \psfrag{4}{$4_2$} \psfrag{5}{$5_2$} \psfrag{6}{$6_2$}
   \psfrag{7}{$7_2$} \psfrag{8}{$8_2$} \psfrag{9}{$9_2$}
    \psfrag{0}{$12_2$}\psfrag{Y}{$18_2$}\psfrag{Z}{$19_2$}
\epsfxsize=8cm \epsfbox{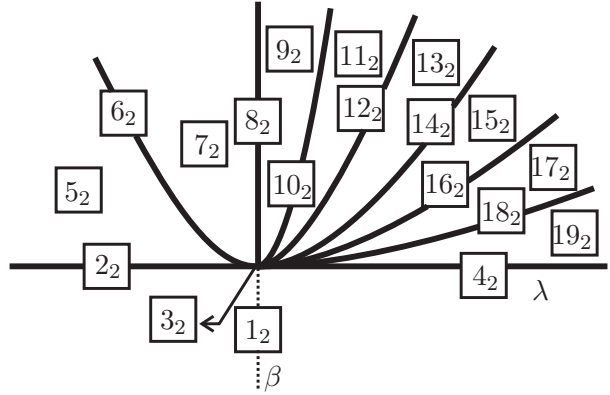}
\caption{\small{Bifurcation Diagram of Theorems 2 and 3.}}
\label{fig diag bif teo 2}
\end{center}
\end{figure}

The bifurcation diagram is illustrated in Figure \ref{fig diag bif
teo 2}.\end{proof}


\section{Proof of Theorem 3}\label{secao prova teorema 3}

\begin{proof}[Proof of Theorem 3] In Case $1_3$ we assume
that $Y$ presents the behavior $Y^-$.  In Cases $2_3$, $3_3$ and
$4_3$ we assume that $Y$ presents the behavior $Y^0$. In Cases $5_3
- 19_3$ we assume that $Y$ presents the behavior $Y^+$.

$\diamond$ \textit{Case $1_3$. $\beta<0$, Case $2_3$. $\lambda<0$,
Case $3_3$. $\lambda=0$, Case $4_3$. $\lambda>0$, Case $5_3$.
$\lambda< - \sqrt{\beta}$, Case $6_3$. $\lambda = - \sqrt{\beta}$,
Case $7_3$. $- \sqrt{\beta} < \lambda < 0$, Case $8_3$. $\lambda =
0$ and Case $9_3$. $0 < \lambda < \sqrt{\beta}$:} By the choice of
the bump function $B$, these cases are analogous to Cases $1_1$,
$2_1$, $3_1$, $4_1$, $5_1$, $6_1$, $7_1$, $8_1$ and $9_1$.


$\diamond$ \textit{Case $10_3$. $\lambda = \sqrt{\beta}$:} The
distribution of the connected components of $\Sigma$ and the
behavior of $H$ are the same as Case $12_2$. This case differs from
Case $12_2$ because, as observed in Subsection \ref{secao analise na
fold-fold singularity}, when $\lambda = \sqrt{\beta}$ and $\mu>0$
canard cycles of  $Z$ do not arise (see Figure \ref{fig dinamica
simbolica}) bifurcating from the non hyperbolic canard cycle
$\Gamma$ of Case $12_3$ below. Moreover, the tangential singularity
$d=b$ behaves itself like a weak attractor. See Figure \ref{fig teo
3 casos 10 11 12}. There exists a hyperbolic repeller canard cycle
 $\Gamma$ of kind III passing through $a$ and $c$.

\begin{figure}[!h]
\begin{center}\psfrag{a}{$a$} \psfrag{b}{\hspace{.1cm}$b$} \psfrag{c}{$c$} \psfrag{d}{$d$}  \psfrag{b=d}{\hspace{-.13cm}$b=d$}\psfrag{A}{\hspace{-.7cm}$0<\lambda < \beta / 2$} \psfrag{B}{$\lambda= \beta / 2$} \psfrag{C}{\hspace{.2cm}$\beta / 2 < \lambda
< \beta$} \psfrag{D}{$ \lambda = \beta$} \psfrag{E}{$\lambda >
\beta$}\psfrag{1}{$12_3$}  \psfrag{2}{$11_3$}
\psfrag{3}{$10_3$}\psfrag{4}{$14_1$} \psfrag{5}{$15_1$}
\epsfxsize=12cm \epsfbox{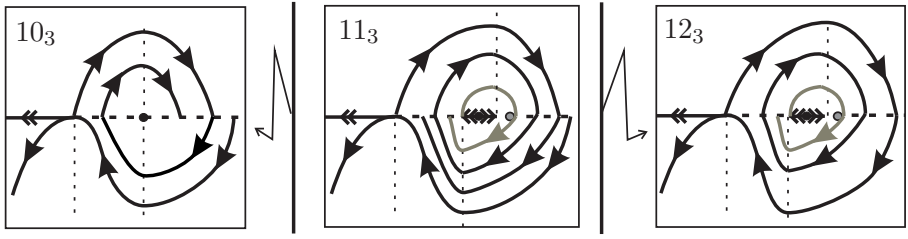}
\caption{\small{Cases $10_3 - 12_3$.}} \label{fig teo 3 casos 10 11
12}
\end{center}
\end{figure}

$\diamond$ \textit{Case $11_3$. $\sqrt{\beta}
 <\lambda< \sqrt{\beta}- \mu/2$:}  The points of $\Sigma$ on the left of
$a$ and on $(b,d)$ belong to $\Sigma^e$. The points on $(a,b)$ and
on the right of $d$ belong to $\Sigma^c$. The graph of $H$ is like
H-3 of Figure \ref{fig diag bif H}. We can prove that $p_1 \in
(b,d)$ is a $\Sigma-$repeller and $r_1$ is on the right of $d$.
Since $\varrho_{Y}(\varrho_{X}(a)) \in (a,b)$ there exists a point
$Q \in (\varrho_{Y}(\varrho_{X}(a)),\varrho_{Y}(d))$ such that
$\eta'(Q)=1$. So there exists a hyperbolic attractor canard cycle
$\Gamma$, of kind I, passing through $Q$. See Figure \ref{fig teo 3
casos 10 11 12}. By Lemma \ref{lema hipotese H}, in this Hopf
Bifurcation a unique canard
cycle arises. Moreover, 
there exists a hyperbolic repeller canard cycle
 $\Gamma$ of kind III passing through $a$ and $c$.

$\diamond$ \textit{Case $12_3$. $\lambda = \sqrt{\beta}- \mu/2$:}
The configuration on $\Sigma$ and the graph of $H$ are the same as
Case $11_3$.  The attractor canard cycle $\Gamma$ presented in the
previous case is persistent. Recall that this canard cycle is born
from the bifurcation of Case $10_3$. So, the radius of $\Gamma$ does
not tend to zero when $\lambda$ tends to $\sqrt{\beta} + \mu/2$.
Moreover, it appears a non hyperbolic canard cycle passing through
$a$ and $c$. See Figure \ref{fig teo 2 casos 10 11 12}.

$\diamond$ \textit{Case $13_3$. $\sqrt{\beta} - \mu/2 <\lambda<
L_1$, Case $14_3$. $\lambda = L_1$,  Case $15_3$. $L_1 < \lambda < 2
\sqrt{\beta} - \mu/2 $, Case $16_3$. $\lambda = 2 \sqrt{\beta} -
\mu/2$, Case $17_3$. $2 \sqrt{\beta} - \mu/2 < \lambda < 3
\sqrt{\beta} - \mu$, Case $18_3$. $\lambda = 3 \sqrt{\beta} - \mu$
and Case $19_3$. $\lambda
> 3 \sqrt{\beta} - \mu$:} The analysis of
these cases is done in a  similar way as Cases $11_1$, $12_1$,
$13_1$, $14_1$, $15_1$, $16_1$ and $17_1$, respectively.


%
The bifurcation diagram is illustrated in Figure \ref{fig diag bif
teo 2} replacing the number $2$ subscript by the number
$3$.\end{proof}


\section{Proof of Theorem A}\label{secao prova teorema A}

\begin{proof}[Proof of Theorem A] Since in Equation \eqref{eq fold-cusp 3 parametros
inicio}
 we can take $\mu \in (-\mu_0,\mu_0)$, from Theorems 1, 2 and 3 we derive that  it bifurcation diagram contains all the $55$ cases
described 
in Theorems 1, 2 and 3. But some of them are $\Sigma-$equivalent and
the  number of distinct topological behaviors is $23$. Moreover,
each topological behavior can be represented respectively by the
Cases $1_1$, $2_1$, $3_1$, $4_1$, $5_1$, $6_1$,  $7_1$, $8_1$,
$9_1$, $10_1$, $11_1$, $12_1$, $13_1$,  $14_1$, $15_1$, $16_1$,
$17_1$, $10_2$, $11_2$, $12_2$, $10_3$, $11_3$ and $12_3$.

The full behavior of the three$-$parameter family of NSDS's
expressed by Equation \eqref{eq fold-cusp 3 parametros inicio} is
illustrated in Figure \ref{fig diagrama bifurcacao teo A}  where we
consider a sphere around the point $(\lambda, \beta, \mu) = (0,0,0)$
with a small ray and so we make a stereographic projection defined
on the entire sphere, except the south pole. Still in relation to
this figure, the numbers pictured correspond to the occurrence of
the cases described in the previous theorems. As expected, the cases
$3_1$ and $3_2$ are not represented in this figure because they are,
respectively, the center and the south pole of the
sphere.\end{proof}

\begin{figure}[h!]
\begin{center}
\psfrag{A}{$1_2$}\psfrag{B}{$1_1$}\psfrag{C}{$1_3$}
\psfrag{D}{$6_3$}\psfrag{E}{$2_2$}
\psfrag{F}{$2_1$}\psfrag{G}{$6_2$}\psfrag{H}{$7_1$}
\psfrag{I}{$7_3$}\psfrag{J}{$5_3$}
\psfrag{K}{$3_3$}\psfrag{L}{$7_2$}\psfrag{M}{$2_3$}
\psfrag{N}{$8_1$} \psfrag{O}{$8_3$}
\psfrag{P}{$8_2$}\psfrag{Q}{$5_2$}\psfrag{R}{$6_1$}
\psfrag{S}{$5_1$} \psfrag{T}{$11_3$}
\psfrag{U}{$12_3$}\psfrag{V}{$10_3$}\psfrag{X}{$9_2$}
\psfrag{Y}{$9_1$} \psfrag{W}{$17_3$}
\psfrag{Z}{$9_2$}\psfrag{0}{$15_1$}\psfrag{1}{$11_2$}
\psfrag{2}{$10_2$} \psfrag{3}{$13_1$}
\psfrag{4}{$16_3$}\psfrag{5}{$13_3$}\psfrag{6}{$4_2$}
\psfrag{7}{$4_1$} \psfrag{8}{$12_2$} \psfrag{9}{$16_2$}
\psfrag{`}{$\hspace{-.0cm}\mu>0$}\psfrag{~}{$\mu =
0$}\psfrag{:}{$17_2$}\psfrag{@}{$18_2$}
\psfrag{#}{$9_3$}\psfrag{^}{$11_1$}\psfrag{&}{$14_3$}
\psfrag{*}{$12_1$}\psfrag{}{$$}
\psfrag{-}{$4_3$}\psfrag{_}{$16_1$}\psfrag{=}{$15_3$}
\psfrag{+}{$10_1$}\psfrag{}{$$} \psfrag{|}{$\beta
>0$}\psfrag{;}{$15_1$}\psfrag{,}{$\hspace{.4cm}\mu<0$}
\psfrag{<}{$14_1$} \psfrag{.}{$15_2$} \psfrag{>}{$17_2$}
\psfrag{?}{$\lambda < 0$}\psfrag{a}{$\beta =
0$}\psfrag{b}{$\lambda=0$}\psfrag{c}{$\lambda>0$} \psfrag{d}{$\beta
<0$}\psfrag{e}{$14_2$}\psfrag{f}{$13_2$}\psfrag{q}{$18_3$}
\psfrag{w}{$19_3$}\psfrag{p}{$16_1$} \psfrag{o}{$17_1$}
\psfrag{n}{$18_2$} \psfrag{g}{$14_2$}\psfrag{z}{$19_2$}
\epsfxsize=13cm \epsfbox{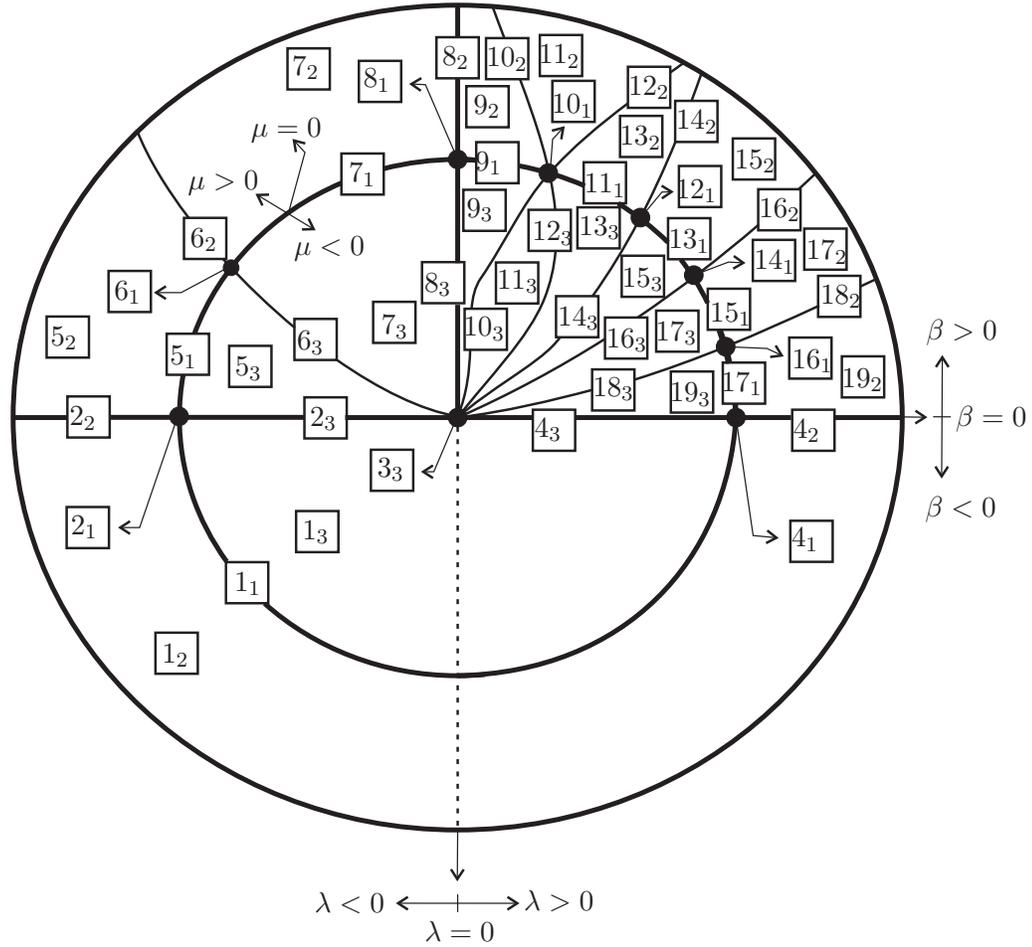}
\caption{\small{Bifurcation diagram of the invisible fold$-$cusp
singularity.}} \label{fig diagrama bifurcacao teo A}
\end{center}
\end{figure}


\section{Proof of Theorem B}\label{secao prova teorema B}

When we consider Equation \eqref{eq fold-cusp 2 parametros inicio}
the function $H$, given by \eqref{eq H}, is constant and equal to
$1$ independently of the value of $\mu$. Moreover, distinct values
of the bump function $\widetilde{B}$ (where $\widetilde{B} \neq B$)
do not produce any topological change in the bifurcation diagram of
the singularity. In another words, two parameters are enough to
describe the full behavior of this singularity. Observe that, by
Proposition \ref{prop quantidade de equilibria}, we have
$\Sigma^{f}=\emptyset$ and it does not have virtual pseudo
equilibria.


\begin{proof}[Proof of Theorem B] Since $X$ has a unique $\Sigma-$fold point
which is visible we conclude that canard cycles do not arise. In
Case $1_B$ we assume that $Y$ presents the behavior $Y^{-}$. In
Cases $2_B$, $3_B$ and $4_B$ we assume that $Y$ presents the
behavior $Y^0$. In Cases $5_B - 11_B$ we assume that $Y$ presents
the behavior $Y^+$.

$\diamond$ \textit{Case $1_B$. $\beta<0$:} The points of $\Sigma$ on
the left of $d$ belong to $\Sigma^c$ and the points on the right of
$d$ belong to $\Sigma^e$. See Figure \ref{fig 1 teo B}.

\begin{figure}[!h]
\begin{center}\psfrag{A}{$p_1$} \psfrag{B}{$p_2$} \psfrag{C}{$r_1$}\psfrag{D}{$r_2$}
\epsfxsize=3cm \epsfbox{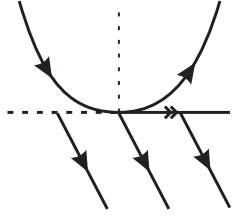} \caption{\small{Case
$1_B$.}} \label{fig 1 teo B}
\end{center}
\end{figure}

$\diamond$ \textit{Case $2_B$. $\lambda<0$, Case $3_B$. $\lambda=0$
and Case $4_B$. $\lambda>0$:} The configuration of the connected
components of $\Sigma$ is the same as Case $1_B$. Note that, when
$\lambda<0$ (Case $2_B$), it appears a tangential singularity
$P=(\lambda,0)\in \Sigma^e$ but $Z^{\Sigma}$ is always oriented from
the left to the right. These cases are illustrated in Figure
\ref{fig teo B casos 2 3 4}.

\begin{figure}[!h]
\begin{center}\psfrag{A}{$d$} \psfrag{B}{\hspace{-.1cm}$d=e$} \psfrag{C}{$r_1$}
\psfrag{D}{$r_2$}\psfrag{E}{$e$} \psfrag{F}{$d<e$} \psfrag{G}{$d=e$}
\psfrag{H}{$d>e$} \psfrag{I}{$2_1$} \psfrag{J}{$3_1$}
\psfrag{K}{$4_1$} \epsfxsize=11cm \epsfbox{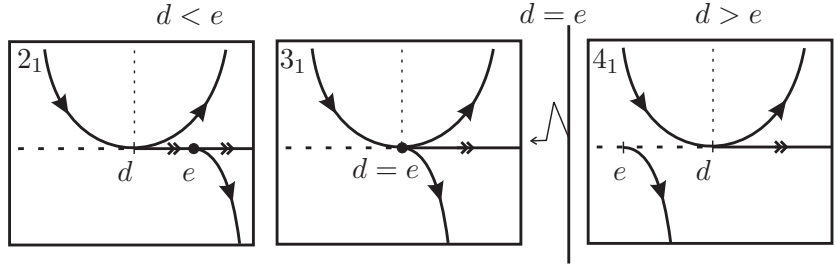}
\caption{\small{Cases $2_B - 4_B$.}} \label{fig teo B casos 2 3 4}
\end{center}
\end{figure}

$\diamond$ \textit{Case $5_B$. $\lambda< - 2 \sqrt{\beta}$, Case
$6_B$. $\lambda = - 2\sqrt{\beta}$ and Case $7_B$. $- 2\sqrt{\beta}
< \lambda < - \sqrt{\beta}$:} The points of $\Sigma$ on the right of
$b$ and inside the interval $(d,a)$ belong to $\Sigma^e$. The points
on $(a,b)$ and on the left of $d$ belong to $\Sigma^c$. See Figure
\ref{fig teo B casos 5 6 7}.

\begin{figure}[!h]
\begin{center}\psfrag{A}{$a$} \psfrag{B}{$b$} \psfrag{C}{$c$}
\psfrag{D}{$d$}\psfrag{E}{$0$} \psfrag{F}{$d<c$} \psfrag{G}{$d=c$}
\psfrag{H}{$c<d<a$} \psfrag{I}{$5_B$} \psfrag{J}{$6_B$}
\psfrag{K}{$7_B$} \psfrag{L}{$d=c$} \epsfxsize=11cm
\epsfbox{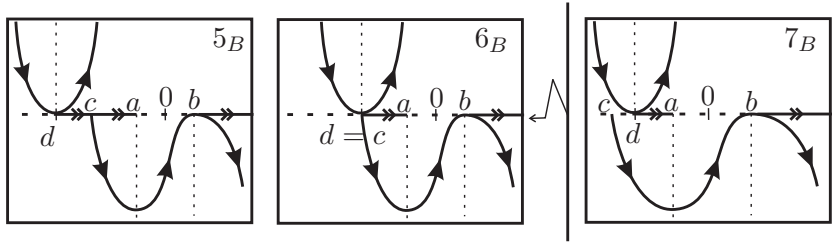} \caption{\small{Cases $5_B -
7_B$.}} \label{fig teo B casos 5 6 7}
\end{center}
\end{figure}

$\diamond$ \textit{Case $8_B$. $\lambda = -\sqrt{\beta}$:} In this
case $a=d$ and the configuration of the connected components of
$\Sigma$ is illustrated in Figure \ref{fig teo B caso 8}.

\begin{figure}[!h]
\begin{center}\psfrag{A}{$a=d$} \psfrag{B}{$b$} \psfrag{C}{$c$}
\psfrag{D}{$d$}\psfrag{E}{\hspace{.1cm}$0$} \psfrag{F}{$d<c$}
\psfrag{G}{$d=c$} \psfrag{H}{$c<d<a$} \psfrag{I}{$5_B$}
\psfrag{J}{$6_B$} \psfrag{K}{$7_B$} \psfrag{L}{$d=c$} \epsfxsize=3cm
\epsfbox{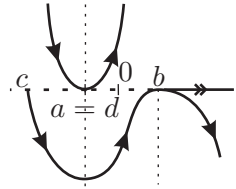} \caption{\small{Case $8_B$.}}
\label{fig teo B caso 8}
\end{center}
\end{figure}

$\diamond$ \textit{Case $9_B$. $-\sqrt{\beta}<\lambda<
\sqrt{\beta}$:} The points of $\Sigma$ on the right side of $b$
belong to $\Sigma^e$ and the points inside the interval $(a,d)$
belong to $\Sigma^s$. The points on $(d,b)$ and on the left of $a$
belong to $\Sigma^c$. See Figure \ref{fig teo B casos 9 10 11}.

$\diamond$ \textit{Case $10_B$. $\lambda = \sqrt{\beta}$:} In this
case $d=b$ and the configuration of the connected components of
$\Sigma$ is illustrated in Figure \ref{fig teo B casos 9 10 11}.

$\diamond$ \textit{ Case $11_B$. $\lambda > \sqrt{\beta}$:} The
points of $\Sigma$ on the right of $d$ belong to $\Sigma^e$ and the
points inside the interval $(a,b)$ belong to $\Sigma^s$. The points
on $(b,d)$ and on the left of $a$ belong to $\Sigma^c$. See Figure
\ref{fig teo B casos 9 10 11}.

\begin{figure}[!h]
\begin{center}\psfrag{A}{$a$} \psfrag{B}{$b$} \psfrag{C}{$c$}
\psfrag{D}{$d$}\psfrag{E}{$0$} \psfrag{F}{$d<c$} \psfrag{G}{$d=b$}
\psfrag{H}{$c<d<a$} \psfrag{I}{$9_B$} \psfrag{J}{$10_B$}
\psfrag{K}{$11_B$} \psfrag{L}{$d=b$} \epsfxsize=11cm
\epsfbox{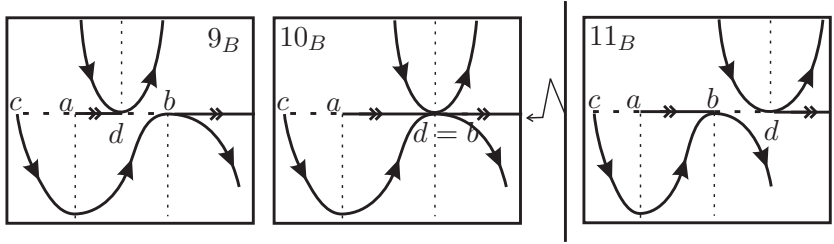} \caption{\small{Cases $9_B -
11_B$.}} \label{fig teo B casos 9 10 11}
\end{center}
\end{figure}

\begin{figure}[!h]
\begin{center}\psfrag{A}{$11_1$} \psfrag{B}{$11_B$} \psfrag{C}{$6_B$} \psfrag{D}{$14_1$} \psfrag{E}{$15_1$}
\psfrag{F}{$\lambda$}  \psfrag{G}{$\beta$}
\psfrag{H}{$\lambda=-\sqrt{\beta}$} \psfrag{I}{$\lambda=
2\sqrt{\beta}$} \psfrag{J}{$\lambda= -2\sqrt{\beta}$}
\psfrag{K}{$\lambda=\sqrt{\beta}$}
 \psfrag{1}{$1_B$} \psfrag{2}{$2_B$} \psfrag{3}{$3_B$}
  \psfrag{4}{$4_B$} \psfrag{5}{$7_B$} \psfrag{6}{$8_B$}
   \psfrag{7}{$5_B$} \psfrag{8}{$9_B$} \psfrag{9}{$9_1$}
    \psfrag{0}{$10_B$}
\epsfxsize=8cm \epsfbox{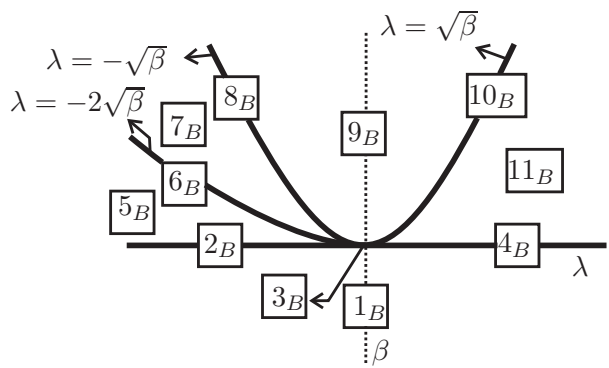}
\caption{\small{Bifurcation Diagram of Theorem B.}} \label{fig diag
bif teo B}
\end{center}
\end{figure}

The bifurcation diagram is illustrated in Figure \ref{fig diag bif
teo B}.\end{proof}

\section{Concluding Remarks}\label{secao conclusao}

The results in Section 12 of \cite{Marcel} were revisited and
extended in this paper. The bifurcation diagram of a
three$-$parameter family of NSDS's presenting a fold$-$cusp
singularity is exhibited. In particular it is shown the existence of
some new interesting global bifurcations around the standard
fold$-$cusp singularity expressed by \eqref{eq fold-cusp geral}.
Moreover, the simultaneous occurrence of such local and global
bifurcations indicates how complex is the behavior of this
singularity.


\vspace{1cm}

\noindent {\textbf{Acknowledgments.} We would like to thank the
referee  for helpful comments and suggestions. The first and the
third authors are partially supported by a FAPESP-BRAZIL grant
2007/06896-5. The second author was partially supported by
FAPESP-BRAZIL grants 2007/08707-5, \linebreak 2010/18190-2 and
2012/00481-6.}

\end{document}